\documentclass[a4paper, 12pt]{article}
\usepackage{mathrsfs}
\usepackage{amsmath}
\usepackage{amsfonts}
\usepackage[T1]{fontenc}
\usepackage[latin9]{inputenc}
\usepackage{amsthm}
\usepackage{graphicx}
\usepackage{amsmath}

\usepackage{amsthm}
\usepackage{array}
\usepackage{cases}
\makeatletter
\def\th@plain{%
  \upshape 
}
\makeatother

\makeatletter
\renewenvironment{proof}[1][\proofname]{\par
  \pushQED{\qed}%
  \normalfont \topsep6\p@\@plus6\p@\relax
  \trivlist
  \item[\hskip\labelsep
        \bfseries
    #1\@addpunct{.}]\ignorespaces
}{%
  \popQED\endtrivlist\@endpefalse
}
\makeatother

\newtheorem{theorem}{Theorem}[section]

\newtheorem{conjecture}[theorem]{Conjecture}
\numberwithin{equation}{section}

\newtheorem{thm}{Theorem}[section]

\newtheorem{lem}[thm]{Lemma}

\newtheorem{conj}[thm]{Conjecture}

\newtheorem{clm}{Claim}

\numberwithin{equation}{section}

\usepackage[varg]{txfonts}
\usepackage{graphicx, epsfig, subfigure}
\usepackage{enumerate} 
\usepackage[square, numbers, sort&compress]{natbib} 
\ifx\pdfoutput\undefined
 \usepackage[dvipdfm,%
  pdfstartview=FitH, 
  bookmarks=true,%
  bookmarksnumbered=true, 
  bookmarksopen=true, 
  plainpages=false,%
  pdfpagelabels,%
  colorlinks=true, 
  linkcolor=blue, 
  citecolor=blue,%
  urlcolor=black,
  pdfborder=001]{hyperref}
  \AtBeginDvi{}  
\else
 \usepackage[pdftex,%
  pdfstartview=FitH, 
  bookmarks=true,%
  bookmarksnumbered=true, 
  bookmarksopen=true, 
  plainpages=false,%
  pdfpagelabels,%
  colorlinks=true, 
  linkcolor=blue, 
  citecolor=blue,%
  urlcolor=black,
  pdfborder=001]{hyperref}
\fi




\numberwithin{equation}{section}

\setlength{\textwidth}{6.5in}
\setlength{\textheight}{9in}
\setlength{\voffset}{-1.5cm}
\setlength{\hoffset}{-2.1cm}

\newcommand{\gx}{G^{\times}}

\begin{document}

\title{\LARGE A structure of 1-planar graph and its applications to coloring problems
\thanks{Mathematics Subject Classification (2010): 05C15, 05C10}
\thanks{Supported by the Fundamental Research Funds for the Central Universities (No.\,JB170706), the Natural Science Basic Research Plan in Shaanxi Province of China (No.\,2017JM1010), and
the National Natural Science Foundation of China (Nos.\,11871055, 11301410).}}
\author{Xin Zhang$^1$\thanks{Corresponding author. Email: xzhang@xidian.edu.cn},\,\, Bei Niu$^1$,\,\, Jiguo Yu$^2$\\
{\small 1. School of Mathematics and Statistics, Xidian University, Xi'an, Shaanxi, 710071, China}\\
{\small 2. School of Information Science and Engineering, Qufu Normal Univeristy, Rizhao, Shandong, 276826, China}}
\date{}

\maketitle

\begin{abstract}\baselineskip 0.60cm
A graph is 1-planar if it can be drawn on a plane so that each edge is crossed by at most one other edge. In this paper, we first give a useful structural theorem for 1-planar graphs, and then apply it to the list edge and list total coloring, the $(p,1)$-total labelling, and the equitable edge coloring of 1-planar graphs. More precisely, we verify the well-known List Edge Coloring Conjecture and List Total Coloring Conjecture for 1-planar graph with maximum degree at least 18, prove that the $(p,1)$-total labelling number of every 1-planar graph $G$ is at most $\Delta(G)+2p-2$ provided that $\Delta(G)\geq 8p+2$ and $p\geq 2$, and show that every 1-planar graph has an equitable edge coloring with $k$ colors for any integer $k\geq 18$. These three results respectively generalize the main theorems of three different previously published papers.

\vspace{3mm}\noindent \emph{Keywords:  1-planar graph; list edge coloring; list total coloring; $(p,1)$-total labelling; equitable edge coloring}.
\end{abstract}

\baselineskip 0.60cm

\section{Introduction }

Throughout the paper, all graphs are finite, simple and undirected. By $V(G),E(G),\delta(G)$ and $\Delta(G)$, we denote the set of vertices, the set of edges, the minimum degree and the maximum degree of a graph $G$. If $G$ is a plane graph, then $F(G)$ denotes the set of faces of $G$. A $k$-, $k^+$- and $k^-$\emph{-vertex} (resp.\,\emph{face}) is a vertex (resp.\,face) of degree $k$, at least $k$ and at most $k$, respectively.  For undefined concepts we refer the reader to
\cite{Bondy}.

A \emph{proper edge (resp.\,total) $k$-coloring} of $G$ is a function $\varphi$ from $E(G)$ (resp.\,$V(G)\cup E(G)$) to $\{1,2,\ldots,k\}$ so that $\varphi(x)\neq \varphi(y)$ if $x$ and $y$ are two adjacent edges (resp.\,adjacent/incident elements) in $G$. The minimum $k$ such that $G$ has a proper edge (resp.\,total) $k$-coloring is the \emph{edge (resp.\,total) chromatic number} of $G$, denoted by $\chi'(G)$ (resp.\,$\chi''(G)$).

An \emph{edge assignment} $L$ for the graph $G$ is a function so that for any edge $e\in E(G)$, $L(e)$ is a list of possible colors that can be used on $e$.
If $G$ has a proper edge coloring $\varphi$ such that $\varphi(e)\in L(e)$ for each edge $e$ of $G$, then we say that $G$ is \emph{edge-$L$-colorable} and $\varphi$ is an \emph{edge-$L$-coloring} of $G$. A graph $G$ is $edge$ $f$-$choosable$ if, whenever we give lists $L(e)$ of $f(e)$ colors (where $f$ is a function from $E(G)$ to $\mathbb{N}$) to each edge $e$ of $G$, $G$ is edge-$L$-colorable. If $G$ is edge $f$-choosable and $f(e)=k$ for each edge $e\in E(G)$, then $G$ is  \emph{edge $k$-choosable}. The minimum $k$ such that $G$ is edge $k$-choosable is the \emph{list edge chromatic number} or \emph{edge choosability} of $G$, denoted by $\chi'_l(G)$. The \emph{list total chromatic number} or \emph{total choosability} of $G$, denoted by $\chi''_l(G)$, is defined similarly.

Concerning the edge choosability and the total choosability of graphs, there are two well-known conjectures.

\begin{conjecture}[List Edge Coloring Conjecture]
$\chi'_{l}(G)=\chi'(G)$ for any graph $G$.
\end{conjecture}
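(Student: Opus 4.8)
This statement is the List Edge Coloring Conjecture (equivalently the List Coloring Conjecture restricted to line graphs), one of the central open problems in graph coloring; no proof is known in full generality, so what follows is an account of the strategy that settles the principal special cases together with the obstruction that blocks the general argument. The natural starting point is Vizing's theorem, which forces $\chi'(G)$ to equal either $\Delta(G)$ or $\Delta(G)+1$; since trivially $\chi'_l(G)\geq\chi'(G)$, the content of the conjecture is the reverse inequality $\chi'_l(G)\leq\chi'(G)$, that is, producing a proper edge coloring from arbitrary lists of size $\chi'(G)$. The plan is to pass to the line graph $\mathcal{L}(G)$: an edge-$L$-coloring of $G$ is exactly a list vertex coloring of $\mathcal{L}(G)$, so it suffices to list-color $\mathcal{L}(G)$ from lists of size $\chi'(G)$.

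The engine I would use is the kernel method. Recall that a \emph{kernel} of a digraph is an independent set $S$ such that every vertex outside $S$ has an out-neighbor in $S$, and a digraph is \emph{kernel-perfect} if every induced subdigraph has a kernel. The Bondy--Boppana--Siegel lemma states that if $D$ is a kernel-perfect orientation of a graph $H$ and every list satisfies $|L(v)|\geq d_D^+(v)+1$, then $H$ is $L$-colorable. So the concrete goal is to orient $\mathcal{L}(G)$ so that it is kernel-perfect and every out-degree is at most $\chi'(G)-1$. This is exactly Galvin's theorem in the bipartite case: one takes a proper edge coloring of $G$ with $\Delta(G)$ colors, reads it as a linear order on the edges meeting each vertex, and orients $\mathcal{L}(G)$ accordingly; the resulting orientation has out-degrees at most $\Delta(G)-1$, and it is kernel-perfect because a kernel of any induced subdigraph can be extracted from a stable matching in the bipartite preference system that the coloring defines. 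This yields $\chi'_l(G)=\chi'(G)=\Delta(G)$ for all bipartite multigraphs.

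The hard part, and the reason the conjecture remains open, is that for non-bipartite $G$ the line graph $\mathcal{L}(G)$ is in general neither perfect nor known to admit a low-out-degree kernel-perfect orientation; odd cycles in $G$ create configurations in $\mathcal{L}(G)$ for which no stable-matching kernel exists, so Galvin's construction simply collapses. One therefore cannot export the bipartite argument directly, and further progress must instead either restrict the input by discharging, forbidding dense local structures to force a reducible configuration (the route the present paper follows for $1$-planar graphs of large maximum degree), or pass to probabilistic and semi-random methods: the strongest unconditional general result remains Kahn's theorem that $\chi'_l(G)\leq(1+o(1))\Delta(G)$, which confirms the conjecture asymptotically but leaves the exact additive term, and hence the full statement, unresolved.
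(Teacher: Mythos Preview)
Your proposal is appropriate: the statement is the List Edge Coloring Conjecture, which the paper itself presents as an open conjecture and does \emph{not} prove; there is therefore no ``paper's own proof'' to compare against. Your account correctly identifies it as open, explains the kernel/stable-matching mechanism behind Galvin's bipartite resolution, and names Kahn's asymptotic theorem as the best unconditional bound---this is an accurate summary of the state of the art.

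One small point of alignment with the paper: rather than attacking the conjecture head-on, the paper's contribution is to verify LECC (and LTCC) for the restricted class of $1$-planar graphs with $\Delta\geq 18$, via a structural theorem (Theorem~\ref{str-1-g3}) combined with the critical-graph lemmas of Wu and Wang. You allude to this discharging-and-reducible-configuration route in passing; it is worth emphasizing that this is precisely the paper's strategy, not an attempt at the full conjecture.
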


\begin{conjecture}[List Total Coloring Conjecture]
$\chi''_{l}(G)=\chi''(G)$ for any graph $G$.
\end{conjecture}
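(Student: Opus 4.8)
The plan is as follows. The inequality $\chi''_{l}(G)\ge\chi''(G)$ is immediate: assigning to every element of $V(G)\cup E(G)$ the common list $\{1,\dots,\chi''_{l}(G)\}$ exhibits a proper total colouring with $\chi''_{l}(G)$ colours, whence $\chi''(G)\le\chi''_{l}(G)$. So the entire content of the conjecture is the reverse bound $\chi''_{l}(G)\le\chi''(G)$: whenever every vertex and every edge receives a list of $\chi''(G)$ admissible colours, a proper total colouring respecting the lists must exist. First I would pass to the total graph $T(G)$, whose vertices are $V(G)\cup E(G)$ and in which two elements are adjacent exactly when they are adjacent or incident in $G$; a (list) total colouring of $G$ is precisely a (list) vertex colouring of $T(G)$, so the goal becomes $\chi_{l}(T(G))=\chi(T(G))$.

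Two complementary machines would then be brought to bear. The first is the kernel method of Bondy--Boppana--Siegel and Galvin: if a digraph $D$ admits a \emph{kernel-perfect} orientation (every induced subdigraph possesses an independent absorbing set) and each vertex $v$ receives a list of size at least $d^{+}(v)+1$, then $D$ is list-colourable. Galvin used exactly this to settle the List Edge Colouring Conjecture for bipartite multigraphs, and Borodin--Kostochka--Woodall pushed it further. I would attempt to orient $T(G)$ so that it is kernel-perfect with every out-degree at most $\chi''(G)-1$; success would give the exact equality for free. The second machine is the semi-random ``nibble'' of Kahn and its refinements by Molloy--Reed and H\"aggkvist--Janssen, which already yield $\chi'_{l}(G)\le(1+o(1))\Delta(G)$ and, for the total version, $\chi''_{l}(G)\le\Delta(G)+o(\Delta(G))$. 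Combined with the Molloy--Reed bound $\chi''(G)\le\Delta(G)+C$, this pins $\chi''_{l}(G)$ and $\chi''(G)$ to the same value up to an additive $o(\Delta(G))$ term, handling graphs of large maximum degree asymptotically. The remaining ranges of small or irregular maximum degree I would try to dispatch by the structural and discharging strategy used elsewhere in this paper, reducing to a minimal counterexample and exhibiting unavoidable reducible configurations.

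The honest assessment is that this last step is where everything breaks, and it is precisely the reason the conjecture is open. The nibble delivers only an $o(\Delta(G))$ slack, never the exact equality the conjecture demands, so it cannot finish alone; and the kernel method stalls because $T(G)$ is not the line graph of a bipartite graph, so no kernel-perfect orientation with the required out-degree bound is known to exist in general---indeed the ordinary List Colouring Conjecture for vertices is false, and only the special combinatorial structure of total graphs could conceivably rescue the argument. Worst of all, the discharging reduction has no traction in full generality: there is no single unavoidable-set argument valid for \emph{all} graphs, and the genuinely hard instances are the sparse, irregular ones of bounded degree where $\chi''(G)$ may exceed $\Delta(G)+1$ and every global estimate has zero room to spare. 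I therefore expect this proposal to yield the conjecture only in restricted regimes---large $\Delta(G)$ asymptotically, or within structured families such as the 1-planar graphs that are the subject of this paper---and I regard closing the gap to an exact, fully general equality as the central unresolved obstacle.
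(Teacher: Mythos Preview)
The statement you were asked to prove is not a theorem in the paper; it is Conjecture~1.2, the List Total Coloring Conjecture, and the paper offers no proof of it in general. The paper only verifies the conjecture for 1-planar graphs with maximum degree at least~18 (Theorem~\ref{main-thm-1}), via the structural Theorem~\ref{str-1-g3} combined with the reducibility Lemmas~\ref{ww-lem-1} and~\ref{ww-lem-2}. So there is no ``paper's own proof'' against which to compare your proposal.

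You yourself recognise this: your final paragraph correctly diagnoses that the kernel method has no known kernel-perfect orientation of $T(G)$ with out-degree bounded by $\chi''(G)-1$, that the nibble gives only $\chi''_l(G)\le\Delta(G)+o(\Delta(G))$ rather than the exact value, and that discharging cannot work for arbitrary graphs. That diagnosis is accurate, and it means your proposal is not a proof but a survey of partial techniques together with an honest admission that they do not close the gap. In particular, the step ``orient $T(G)$ so that it is kernel-perfect with every out-degree at most $\chi''(G)-1$'' is exactly the missing idea: nobody knows how to do this beyond very special cases (bipartite line graphs, certain planar classes), and without it the kernel lemma yields nothing. The asymptotic argument via Kahn/Molloy--Reed likewise cannot deliver an exact equality, only $\chi''_l(G)=(1+o(1))\chi''(G)$, which is already known and is strictly weaker than the conjecture.

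So the proposal contains no genuine gap in the sense of a fixable error; rather, it is explicitly not a proof, and you say so. If the intent was to prove the conjecture, the attempt fails for the reasons you list. If the intent was to explain why the conjecture is hard, the write-up is reasonable, though it would be clearer to state up front that no proof is being claimed.
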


The List Edge Coloring Conjecture (LECC) was independently posed by Vizing, and by Gupta, and by Albertson and Collins, and by Bollob\'as
and Harris (see \cite{Toft} for the history of this problem). The List Total Coloring Conjecture (LTCC) was posed by Borodin, Kostochka and Woodall \cite{Borodin.lcc}. Until now, the above two conjectures are still widely open, and particular research on some special but nontrivial classes of graphs is carried on. For example, Borodin, Kostochka and Woodall \cite{Borodin.lcc} proved in 1997 that LECC and LTCC hold for planar graphs with maximum degree at least 12. Although this is a result of two decades ago, the bound 12 for the maximum degree there is still the best known bound at this moment.


The aim of this paper is to study these conjectures for the family of 1-planar graphs. A graph is \emph{1-planar} if it can be drawn on a plane so that each edge is crossed by at most one other edge, and this drawing is a \emph{1-plane graph}. Usually, the number of crossings in a 1-plane graph is assumed to be as few as possible. The notion of 1-planarity was introduced by Ringel \cite{R65} while trying to simultaneously color the vertices and faces of a plane graph such that any pair of adjacent or incident elements receive different colors. Ringel \cite{R65} proved that every 1-planar graph is $7$-colorable, and this bound for the chromatic number was later improved to 6 (being sharp) by Borodin \cite{B84,B95}. Recently in 2017, Kobourov, Liotta and Montecchiani \cite{KLM17} reviewed the current literature covering
various research streams about 1-planarity, such as characterization and recognition, combinatorial
properties, and geometric representations.

For the edge and the total colorings of 1-planar graphs, Zhang and Wu \cite{ZW11} proved that the edge chromatic number of every 1-planar graph with maximum degree $\Delta\geq 10$ is equal to $\Delta$, and Zhang and Liu \cite{ZL12} conjectured that the bound for $\Delta$ can be lowered to 8, which is best possible. Zhang, Hou and Liu \cite{ZHL15} proved that the total chromatic number of every 1-planar graph with maximum degree $\Delta\geq 13$ is at most $\Delta+2$. In 2012, Zhang, Wu and Liu \cite{ZWL12} proved the following theorem, which confirms LECC and LTCC for 1-planar graphs with large maximum degree.

\begin{thm}\cite[Zhang, Wu and Liu]{ZWL12}\label{old-1}
  If $G$ is a 1-planar graph with maximum degree $\Delta\geq 21$, then $\chi'(G)=\chi'_l(G)=\Delta$ and $\chi''(G)=\chi''_l(G)=\Delta+1$.
\end{thm}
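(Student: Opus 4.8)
The plan is to derive all four equalities from two upper bounds. Indeed, the inequalities $\chi'(G)\le\chi'_l(G)$ and $\chi''(G)\le\chi''_l(G)$ hold for every graph, while the matching lower bounds $\chi'(G)\ge\Delta$ and $\chi''(G)\ge\Delta+1$ are immediate, since a vertex of degree $\Delta$ forces $\Delta$ distinct colors on its incident edges and $\Delta+1$ distinct colors on itself together with those edges. Thus it suffices to show that every $1$-planar graph with $\Delta\ge 21$ is edge-$\Delta$-choosable and total-$(\Delta+1)$-choosable, and I would treat the two choosability statements in parallel, since the same structural input drives both.

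First I would fix a $1$-plane drawing of $G$ with the fewest crossings and pass to its planarization $\gx$: replace each crossing by a new degree-$4$ vertex, so that $\gx$ is an ordinary plane graph whose vertices split into the \emph{true} vertices (those of $G$) and the \emph{false} (crossing) vertices. Standard facts about minimum-crossing $1$-plane drawings---the two edges at a crossing have four distinct true endpoints, and each edge is crossed at most once---restrict how false vertices can sit in $\gx$, and these facts will be used repeatedly in both the reducibility and the discharging analysis.

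The core is a minimal-counterexample argument: suppose $G$ is a counterexample minimizing $\abs{E(G)}$, and fix lists $L$ of size $\Delta$ (edge case) or $\Delta+1$ (total case) admitting no proper $L$-coloring. By minimality every proper subgraph is colorable, which forbids a list of \emph{reducible configurations}---for instance an edge $uv$ with $d(u)+d(v)$ small, or a low-degree vertex whose neighbours are suitably constrained---because one deletes the offending edges or vertices, colours the smaller graph, and then extends by a counting argument in which each uncoloured element sees fewer already-used colours than its list length. The sharp extension tool is the characterization of degree-choosable graphs (a connected graph is $L$-colorable whenever every list has size at least the vertex's degree, unless each of its blocks is a clique or an odd cycle), applied to the relevant subgraph of the line graph or total graph; this is exactly the Borodin--Kostochka--Woodall engine that yields the constant $21$.

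With a sufficiently rich family of reducible configurations in hand, I would run a discharging argument on $\gx$: assign to each vertex $v$ the charge $d(v)-4$ and to each face $f$ the charge $d(f)-4$, so that by Euler's formula the total charge on $\gx$ equals $-8<0$. I would then design rules sending charge from high-degree true vertices and large faces toward low-degree vertices and triangular faces, and verify that, in the absence of every reducible configuration, each vertex and each face ends with nonnegative charge, contradicting the negative total. The delicate point, and the step I expect to be the main obstacle, is the bookkeeping around false vertices: a crossing vertex has degree exactly $4$ and initial charge $0$, so under the naive scheme it neither absorbs nor donates, yet the edges through a crossing couple the degrees of up to four true vertices and must be accounted for in both the reducibility extensions and the local discharge at incident faces. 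Making the rules balance simultaneously for the edge- and total-choosability versions, and forcing the threshold $\Delta\ge 21$ rather than a weaker bound, is where the real work lies; once the charge contradiction is established, no minimal counterexample exists and the theorem follows.
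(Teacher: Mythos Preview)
This theorem is not proved in the present paper; it is quoted from \cite{ZWL12} as prior work, and the paper's own contribution is the sharper Theorem~\ref{main-thm-1}, which lowers the threshold from $21$ to $18$. So there is no in-paper proof of exactly this statement to compare against.

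That said, your outline is broadly in the right spirit and resembles the paper's proof of the stronger result, but the key technical ingredients differ. The paper (following \cite{WW08}) does not use the Borodin--Kostochka--Woodall degree-choosability criterion you invoke; instead its reducible configurations are (i) a light edge $xy$ with $d(x)+d(y)$ small (Lemma~\ref{ww-lem-1}) and (ii) a \emph{$k$-alternator} for some small $k$ (Lemma~\ref{ww-lem-2}). The absence of alternators is what feeds the discharging via the $k$-master mechanism of Lemma~\ref{master}, which lets high-degree vertices subsidize low-degree ones in a controlled way; your sketch has no analogue of this, and without it the discharging around $2$-, $3$-, $4$-, $5$-vertices is hard to close. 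The paper also uses initial charges $d(v)-6$ on vertices and $2d(f)-6$ on faces rather than your $d(v)-4$, $d(f)-4$; this puts faces in surplus and concentrates the deficit on vertices, which meshes with the master transfers. Your proposal is a plausible high-level plan, but the concrete reducibility tool (alternators) and the matching discharging scheme are the substance of the argument, and your sketch does not yet contain them.
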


 A \emph{$(p,1)$-total $k$-labelling} of a graph $G$, introduced by Havet and Yu \cite{HY.2002,H.2003}, is a function $f$ from $V(G)\cup E(G)$ to the color set $\{0,1,\cdots,k\}$ such that $|f(u)-f(v)|\geq 1$ if $uv\in E(G)$, $|f(e_1)-f(e_2)|\geq 1$ if $e_1$ and $e_2$ are two adjacent edges in $G$, and $|f(u)-f(e)|\geq p$ if the vertex $u$ is incident to the edge $e$.
The minimum $k$ such that $G$ has a $(p,1)$-total $k$-labelling, denoted by $\lambda_p^T(G)$, is  the {\it $(p,1)$-total labelling number} of $G$. It is easy to see that $\lambda_1^T(G)=\chi''(G)-1$. Havet and Yu \cite{HY.2002,Havet} put forward the following conjecture.

\begin{conj}[$(p,1)$-Total Labelling Conjecture]
$\lambda^T_p(G)\leq \min\{\Delta(G)+2p-1,2\Delta(G)+p-1\}$.
\end{conj}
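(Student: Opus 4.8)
The $(p,1)$-Total Labelling Conjecture is known to be widely open for arbitrary graphs, so the realistic target is to establish it for the class under study, the $1$-planar graphs, and in fact to prove the sharper bound $\lambda_p^T(G)\le\Delta(G)+2p-2$ for every $1$-planar $G$ with $\Delta(G)\ge 8p+2$ and $p\ge 2$; since $\Delta+2p-2<\min\{\Delta+2p-1,2\Delta+p-1\}$ for all $\Delta\ge 1$, this verifies the conjecture for such graphs. The plan is the usual minimal-counterexample-plus-discharging scheme, but carried out on a planarization rather than on $G$ itself. First I would fix a $1$-plane drawing of $G$ and form the plane graph $\gx$ by replacing each crossing with a new $4$-valent vertex; Euler's formula then applies directly to $\gx$, while the degrees of the true vertices of $G$ are preserved.

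Second, I would take a counterexample $G$ minimizing $\abs{V(G)}+\abs{E(G)}$ and build up a list of \emph{reducible configurations} that $G$ cannot contain. The engine is a greedy extension: uncolour a small set of elements, label the rest by minimality, and restore the labels one by one. The decisive feature of the $(p,1)$ setting is that incidences are expensive---labelling an edge $uv$ must avoid the labels of its $d(u)+d(v)-2$ adjacent edges together with two \emph{guard bands} of width $2p-1$ centred at $f(u)$ and $f(v)$, and labelling a vertex must avoid its neighbour labels together with a guard band of width $2p-1$ around each incident edge label. With the palette $\{0,\dots,\Delta+2p-2\}$ of size $\Delta+2p-1$, the edge step succeeds whenever $d(u)+d(v)$ lies below a threshold of roughly $\Delta-2p$, and the vertex step succeeds at minor vertices; this should forbid light edges and prevent low-degree vertices from clustering around a common neighbour.

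Third comes the discharging. I would give each vertex and face of $\gx$ the initial charge $\mathrm{ch}(x)=d(x)-4$, so that the total charge equals $-8$ by Euler's formula and every $4$-valent crossing vertex starts neutral. Then I would design local rules sending charge from high-degree vertices and large faces toward the light configurations, and use the reducibility facts to prove that after redistribution every vertex and every face is nonnegative. Since the total charge is conserved yet strictly negative, this contradiction completes the argument.

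The main obstacle I anticipate is the interaction of the crossings with the $\pm p$ vertex--edge constraints. In ordinary total colouring an edge forbids only the two labels $f(u),f(v)$, whereas here each incident vertex removes a full band of $2p-1$ labels, so the reducibility inequalities are tighter by an amount growing with $p$; this is precisely why a hypothesis of the form $\Delta\ge 8p+2$ is forced, and making the discharging rules absorb these wide bands near the small faces created by crossings will be the delicate part. A secondary difficulty is that a crossed edge constrains, through its crossing partner, elements that are not adjacent in $G$, so the extension step must label each crossing pair consistently instead of treating its two edges independently.
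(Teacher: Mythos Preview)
Your high-level strategy---minimal counterexample plus discharging on the planarization $G^\times$---is the right shape, and you correctly identify that the realistic target is the sharper bound $\lambda_p^T(G)\le\Delta(G)+2p-2$ for $1$-planar $G$ with $\Delta\ge 8p+2$. But two things diverge from the paper, one organizational and one substantive.

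Organizationally, the paper does not interleave discharging with the labelling argument. It first proves a stand-alone structural theorem (Theorem~\ref{str-1-g3}): every $1$-planar graph with $\delta\ge 2$ contains a light edge of one of two specified types, or a \emph{$k$-alternator} for some $k\in\{2,3,4,5\}$. The $(p,1)$-total labelling result is then a four-line corollary, quoting known lemmas (Lemmas~\ref{zyl-lem-1} and~\ref{zyl-lem-2}) that a critical $(p,1)$-total labelled graph contains neither. The discharging uses charges $d(v)-6$ on vertices and $2d(f)-6$ on faces, so faces start nonnegative; your $d(x)-4$ is legitimate but forces you to rescue $3$-faces as well as low-degree vertices.

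The substantive gap is that your list of reducible configurations is missing the decisive one. Light edges alone cannot carry the discharging: $K_{2,n}$ is planar with $\delta=2$ and every edge of degree sum $n+2$, so no edge-weight argument will discharge its $2$-vertices. The paper's remedy is the $k$-alternator---a bipartite subgraph $F$ with parts $X,Y$ satisfying $d_F(x)=d_G(x)\le k$ on $X$ and $d_F(y)\ge d_G(y)+k-\Delta$ on $Y$. Its absence (Lemma~\ref{master}) guarantees that every $d$-vertex with $d\le k$ has a \emph{$k$-master}, and it is these masters that feed charge to $2$-, $3$-, $4$-, $5$-vertices in rules R6--R9. Your phrase ``prevent low-degree vertices from clustering around a common neighbour'' gestures toward this but does not capture it; without an alternator-type reducibility lemma and the accompanying master machinery, the discharging will not close.

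Finally, your ``secondary difficulty'' is a misconception. Two edges that cross in the drawing but are not adjacent in $G$ impose no constraint on each other's labels; the false vertices of $G^\times$ exist solely to make Euler's formula available and are never labelled. There is no consistency condition on a crossing pair, and this is not where the difficulty lies.
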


For $p=1$, the above conjecture is nothing but the well-known Total Coloring Conjecture, which states that $\chi''(G)\leq \Delta(G)+2$. Since $\Delta(G)+1$ is a natural lower bound for $\chi''(G)$, and the $(p,1)$-total labelling is a generalization of the total coloring, it is interesting to consider when we have $\lambda^T_p(G)\leq\Delta(G)+2p-2$. Concerning this problem, Bazzaro, Montassier and Raspaud \cite{Bazzaro} proved that if $G$ is a planar graph with $\Delta(G)\geq 8p+2$ and $p\geq 2$, then $\lambda^T_p(G)\leq\Delta(G)+2p-2$. The lower bound for the maximum degree in this result was recently improved to $4p+4$ by Sun and Wu \cite{SW17}. For 1-planar graphs, Zhang, Yu and Liu \cite{ZYL11} proved the following result.

\begin{thm}\cite[Zhang, Yu and Liu]{ZYL11}\label{old-2}
  If $G$ is a 1-planar graph with $\Delta(G)\geq 8p+4$ and $p\geq 2$, then $\lambda^T_p(G)\leq\Delta(G)+2p-2$.
\end{thm}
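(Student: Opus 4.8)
The plan is to argue by contradiction using an edge-minimal counterexample together with a discharging analysis on the planarization of the drawing. Let $G$ be a $1$-planar graph with $\Delta:=\Delta(G)\ge 8p+4$ and $p\ge 2$ that admits no $(p,1)$-total $(\Delta+2p-2)$-labelling and is edge-minimal with this property; fix a $1$-plane drawing with the fewest crossings and let $\gx$ be the associated plane graph obtained by turning every crossing into a new (false) $4$-vertex, so that $\gx$ is genuinely planar and Euler's formula applies. Throughout, the palette has $N=\Delta+2p-1$ colours $\{0,1,\dots,\Delta+2p-2\}$, and the central quantitative fact I will use repeatedly is that when one extends a partial labelling to a single uncoloured edge $e=uv$, the colours ruled out are those on the $\le d(u)+d(v)-2$ adjacent edges together with the two forbidden windows $[f(u)-p+1,f(u)+p-1]$ and $[f(v)-p+1,f(v)+p-1]$ of size at most $2p-1$ each, while relabelling a vertex $w$ forbids at most $d(w)$ neighbour-colours and the union of the windows of its incident edges.

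First I would collect a list of reducible configurations. The simplest are small-degree vertices: a $1$-vertex is trivially removable, and by the counting above an edge $uv$ with $d(u)+d(v)$ small can be deleted, its endpoints and surrounding structure recoloured by minimality, and the edge reinserted, because the number of forbidden colours stays below $N$ (concretely, an edge is extendable whenever $d(u)+d(v)<\Delta-2p+3$). More refined configurations --- a vertex of controlled neighbourhood and moderate degree, adjacent pairs of medium-degree vertices, and the local patterns forced around a crossing, where the two crossing edges behave like a $4$-cycle through a false vertex --- are shown reducible by the same delete--recolour--extend scheme, ordering the reinsertions so that the $2p-1$ windows of already-fixed vertices never exhaust the palette. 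The extra factor built into the $2p-1$ windows is exactly what pushes the degree threshold from the planar value $8p+2$ up to $8p+4$ in the $1$-planar setting, so keeping careful track of window overlaps is what makes these extensions go through.

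Next I would run a discharging argument on $\gx$ to show that this set of reducible configurations is unavoidable, contradicting minimality. Assign to each vertex $x$ of $\gx$ the charge $d_{\gx}(x)-4$ and to each face $\phi$ the charge $d(\phi)-4$; by Euler's formula the total charge is $-8<0$. Using that true vertices of small degree and the false $4$-vertices are the only places where negative charge can sit, together with the sparseness of $1$-planar graphs (at most $4|V(G)|-8$ edges), I would design discharging rules that move charge from high-degree true vertices and large faces toward the low-degree and crossing vertices. If no reducible configuration occurred, every element would end with nonnegative charge, contradicting the negative total; hence $G$ cannot exist.

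The main obstacle, and where the threshold $8p+4$ really comes from, is the coupling between vertex labels and edge labels through the distance-$p$ constraint: unlike ordinary total colouring, each incident edge forbids a whole window of $2p-1$ values around a vertex's label, so a naive greedy extension fails precisely for medium-degree vertices and for the degree-$4$ false vertices created by crossings. The delicate part is therefore to arrange the reducible configurations and the order of reinsertion so that, at every step, the combined count of adjacent-edge colours and window-colours is strictly less than $N=\Delta+2p-1$; balancing this budget against the discharging so that the forced configurations are exactly the reducible ones, and verifying it uniformly for all $p\ge 2$, is the crux of the proof.
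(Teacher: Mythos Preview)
This theorem is not proved in the present paper; it is quoted from \cite{ZYL11}, and the paper then \emph{supersedes} it by Theorem~\ref{main-thm-2}, which obtains the same conclusion under the weaker hypothesis $\Delta(G)\ge 8p+2$. So the relevant comparison is between your sketch and the paper's proof of the stronger result. That proof is modular: a single structural theorem for $1$-planar graphs (Theorem~\ref{str-1-g3}) is established once by discharging on $\gx$ with charges $d_{\gx}(v)-6$ and $2d_{\gx}(f)-6$, and then two off-the-shelf lemmas on critical $(p,1)$-total labelled graphs (Lemmas~\ref{zyl-lem-1} and~\ref{zyl-lem-2}, both taken from \cite{ZYL11}) rule out each of the configurations the structural theorem guarantees. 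Your proposal instead runs a bespoke discharging for the labelling problem with charges $d-4$ on both vertices and faces; that is a legitimate alternative normalization, but the architecture is different.

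There is, however, a genuine gap in your plan. The only reducible edge you make explicit is one with $d(u)+d(v)<\Delta-2p+3$, and you gesture at ``more refined configurations'' without naming them. The configuration that actually carries the argument for low-degree vertices---in \cite{ZYL11}, in this paper, and in the analogous list-colouring results---is the \emph{$k$-alternator}: a bipartite subgraph $F$ with $d_F(x)=d_G(x)\le k$ on one side and $d_F(y)\ge d_G(y)+k-\Delta$ on the other, for some $2\le k\le 5$. Lemma~\ref{zyl-lem-2} shows that a critical graph contains no $k$-alternator, and the absence of alternators is what powers the ``master'' mechanism (Lemma~\ref{master}) that lets high-degree vertices donate charge to $2$-, $3$-, $4$-, $5$-vertices in the discharging. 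Without it your scheme cannot discharge a $2$-vertex whose two neighbours both have degree $\Delta$: the incident edges have degree-sum $\Delta+2$, so your delete--recolour--extend step does not apply, yet the vertex carries charge $-2$ that has to come from somewhere. Finally, your rationalization that the $2p-1$ windows ``push the threshold from the planar value $8p+2$ up to $8p+4$'' is incorrect: the paper shows $8p+2$ already suffices for $1$-planar graphs, so the $8p+4$ in \cite{ZYL11} was an artefact of a coarser structural theorem, not of the window arithmetic.
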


Let $\varphi$ be a function from $E(G)$ to $\{1,2,\ldots,k\}$.
For each vertex $v\in V(G)$, let $c_i(\varphi ,v)= |\{uv\in E(G)\,|\,\varphi (uv)=i\}|$.
An edge $k$-coloring $\varphi$ is $equitable$ if for each $v \in V(G)$, we have
$$
|c_i(\varphi, v)-c_j(\varphi, v)| \leq 1 \;\;\;\; (1 \leq i<j \leq k).%
$$
The \emph{equitable edge chromatic number} $\chi'_{=}(G)$ of a graph $G$ is the smallest number $k$ such that $G$ has an equitable edge $k$-coloring. However, the notion $\chi'_{=}(G)$ is somehow trivial since every graph has an equitable edge $1$-coloring. Therefore, we need another notion to characterize the equitability of an edge coloring .

The \emph{equitable edge chromatic threshold} $\chi'_{\equiv}(G)$ of $G$ is the smallest $k$ such that $G$ has an equitable edge $k'$-coloring for any $k'\geq k$. For example, the equitable edge chromatic threshold of any odd cycle is exactly 3.

From the above definitions, one can easily find that a proper edge coloring of $G$ is trivially equitable. Hence we immediately conclude that $\chi'_{\equiv}(G)\leq \chi'(G)$. However, $\chi'(G)$ may be a too large upper bound for $\chi'_{\equiv}(G)$. For example, Song, Wu and Liu \cite{Song} proved for series-parallel graphs $G$ that $\chi'_{\equiv}(G)=1$ if and only if $G$ is not a connected graph with the number of edges being odd in which each vertex has even degree. Hu et al.\,\cite{HWYZ17} proved that $\chi'_{\equiv}(G)\leq 12$  for any planar graph $G$. For 1-planar graphs,  Hu et al.\,\cite{HWYZ17} gave the following result.

\begin{thm}\cite[Hu et al.]{HWYZ17}\label{old-3}
  If $G$ is a 1-planar graph, then $\chi'_{\equiv}(G)\leq 21$.
\end{thm}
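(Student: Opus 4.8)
The plan is to prove the equivalent assertion that for every fixed integer $k\ge 21$ each $1$-planar graph $G$ admits an equitable edge $k$-colouring; by the definition of $\chi'_{\equiv}$ this yields $\chi'_{\equiv}(G)\le 21$. I would argue by taking a counterexample $G$ with the fewest edges and deriving a contradiction. The whole scheme rests on two pieces: a local \emph{reducibility} step that extends an equitable $k$-colouring across a small configuration, and a \emph{structural} step guaranteeing that every $1$-planar graph contains such a configuration. Because equitable edge colourings need not be proper, the extension step only has to keep the colour multiplicities balanced at each vertex, turning it into a counting problem rather than a conflict-avoidance one.

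For the reducibility engine I first record that, at a vertex $w$ of degree $d$ in an equitable colouring, the colours split into \emph{light} ones (appearing $\lfloor d/k\rfloor$ times) and \emph{heavy} ones (appearing $\lceil d/k\rceil$ times), and that raising the degree of $w$ by recolouring one incident edge to a light colour keeps $w$ equitable. If $uv$ is an edge of $G$, then after deleting it and equitably colouring $G-uv$ by minimality the number of light colours at $u$ is $A_u=k-((d(u)-1)\bmod k)$, and likewise $A_v$ at $v$; a colour light at both ends then exists whenever $A_u+A_v>k$, that is whenever $((d(u)-1)\bmod k)+((d(v)-1)\bmod k)<k$, which for small degrees reduces to $d(u)+d(v)<k+2$. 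For a vertex $v$ of small degree $t$ I would instead delete all $t$ incident edges, colour the rest by minimality, and restore $vu_1,\dots,vu_t$ with \emph{distinct} colours, each light at its neighbour; this is exactly a system of distinct representatives for the light-colour sets $S_1,\dots,S_t$, so Hall's theorem closes the case once $\big|\bigcup_{i\in I}S_i\big|\ge|I|$ holds for every $I$. Both versions succeed as soon as the degrees in the configuration are small relative to $k$, concretely when the relevant degree sums stay below $k+2=23$.

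It remains to guarantee that a minimal counterexample must contain a configuration light enough for the above, and this structural step, where $1$-planarity and the value $21$ enter, is the hard part. I would pass to the plane \emph{planarisation} $\gx$, replacing each crossing of $G$ by a new $4$-valent vertex so that Euler's formula applies and gives $|E(G)|\le 4|V(G)|-8$. Assigning initial charge $d(x)-4$ to each vertex $x$ and $d(f)-4$ to each face $f$ of $\gx$ makes the total charge $-8$, and a discharging argument that moves charge between true vertices, crossing vertices and short faces should force one of a short list of unavoidable configurations, each being a low-degree vertex with neighbours of bounded degree (or a light edge), with the degree bounds tuned so that every listed configuration meets the two-sided or Hall counting condition for $k\ge 21$. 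The genuine obstacle is the bookkeeping here: crossing vertices carry zero charge and must be handled separately from true vertices, a crossed edge interacts with two faces at once, and one has to certify that \emph{every} unavoidable configuration is reducible, not merely some of them. Calibrating the discharging rules so that the unavoidable set coincides with the reducible set is exactly what pins the threshold at $21$.
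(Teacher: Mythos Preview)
First, a framing remark: this theorem is quoted from \cite{HWYZ17} and is not proved in the present paper; the paper instead sharpens it to $\chi'_{\equiv}(G)\le 18$ in Section~5. Both the original argument in \cite{HWYZ17} and the paper's improvement follow the same two-step template you outline --- a minimal counterexample, reducibility lemmas, and a discharging-based structural theorem --- and your single-edge extension (delete $uv$, pick a colour light at both ends, succeeding when $d(u)+d(v)\le k+1$) is exactly Lemma~\ref{HWYZ-lem-1}.

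The genuine gap is your second reducibility tool. The paper (following \cite{HWYZ17}) does \emph{not} reduce at a single small vertex via Hall's theorem; instead it uses the global notion of a \emph{$k$-alternating subgraph} (a bipartite $F\subseteq G$ with $d_F(x)=d_G(x)\le k$ on one side and $d_F(y)\ge k$ on the other) and proves that no such subgraph sits inside a critical graph (Lemma~\ref{HWYZ-lem-2}). Your single-vertex SDR is strictly weaker: if $v$ has degree $5$ and every neighbour has degree $k$, then after deleting the edges at $v$ each light-set $S_i$ is a singleton, and these singletons may coincide, so Hall fails --- yet the light-edge reduction also fails here since $d(v)+d(u_i)=k+5>k+1$. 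This is precisely the situation the alternating-subgraph machinery is built to absorb, and it is not optional: in the discharging, vertices of degree $2$--$5$ are rescued by transferring charge from their ``$k$-masters'' (Lemma~\ref{master} and rules R6--R9), and the existence of masters is \emph{equivalent} to the absence of a $k$-alternating subgraph. Without that hypothesis the discharging cannot make the small true vertices nonnegative, regardless of whether you use charges $d-6,\,2d-6$ as in the paper or $d-4,\,d-4$ as you propose. So the missing idea is the $k$-alternating subgraph reduction; your Hall/SDR step does not substitute for it, and the structural half of your plan cannot close without it.
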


In this paper, we first present in Section 2 an useful structural theorem for 1-planar graphs, which can be used to consider not only the list edge and list total coloring problems, but also some other coloring problems such as the $(p,1)$-total labelling and the equitable edge coloring.
In Section 3, we prove that LECC and LTCC hold for 1-planar graphs with maximum degree at least 18, which improves Theorem \ref{old-1}.
In Section 4, we consider the $(p,1)$-total labeling of 1-planar graph $G$ by proving $\lambda^T_p(G)\leq\Delta(G)+2p-2$ if $\Delta(G)\geq 8p+2$ and $p\geq 2$. This improves Theorem \ref{old-2}. Actually, this result also generalizes the previously mentioned result of Bazzaro, Montassier and Raspaud on planar graphs to the same result on 1-planar graphs.
In Section 5, we improve the upper bound for the equitable edge chromatic threshold of 1-planar graphs in Theorem \ref{old-3} to 18.



\section{Structural Theorem}

The \emph{associated plane graph} $\gx$ of a 1-plane graph $G$ is the plane graph that is obtained from $G$ by turning
all crossings of $G$ into new vertices of degree four. These new vertices in $\gx$ are \emph{false} vertices, and the original vertices of $G$ are \emph{true} ones. A face in $\gx$ is \emph{false} if it is incident with at least one false vertex, and \emph{true} otherwise.

\begin{lem}\label{n.adj}\cite[Lemma 1]{ZW11}
If $G$ is a $1$-plane graph, then

(a) false vertices in $\gx$ are not adjacent;

(b) false 3-face in $\gx$ is not incident with 2-vertex;

(c) if a $3$-vertex $v$ is incident with two
$3$-faces and adjacent to two false vertices in $\gx$, then $v$ is incident with a $5^+$-face;

(d) there exists no edge $uv$ in $\gx$ such that $d_{\gx}(u)=3$, $v$ is a false vertex, and $uv$ is incident with two $3$-faces.
\end{lem}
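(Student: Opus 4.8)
The plan is to fix a drawing of $G$ as a $1$-plane graph with the \emph{minimum} possible number of crossings, and to argue each part by one of two mechanisms: a \emph{double-crossing obstruction}, where some edge of $G$ is forced to be crossed twice (impossible in a $1$-plane graph), or a \emph{crossing-reducing redrawing}, where an edge or a low-degree vertex is slid into an empty face so as to destroy a crossing without creating a new one (contradicting minimality). Throughout I use the defining structure of $\gx$: each false vertex $w$ is the crossing of exactly two edges $e,e'$ of $G$, so $d_{\gx}(w)=4$ and the four half-edges at $w$ occur in the alternating cyclic order $e,e',e,e'$; hence the two halves of one $G$-edge are opposite at $w$, whereas two $\gx$-edges bounding a common face at $w$ come from \emph{different} $G$-edges.

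Part (a) is immediate: a $\gx$-edge joining two false vertices is a sub-arc of a single edge $e$ of $G$ whose two ends are both crossing points, so $e$ is crossed at least twice. For part (b), note first that by (a) a false $3$-face has exactly one false vertex, so I may write it as $uvw$ with $w$ false, $uv\in E(G)$, and $w$ the crossing of $G$-edges $ua'\supset wu$ and $vb'\supset wv$. If $u$ were a $2$-vertex, its two $G$-neighbours would be $v$ and $a'$; reading the cyclic order at $w$ shows that $a'$ and $v$ lie on one common face of $\gx$. The plan is then to relocate $u$, together with its edges $uv$ and $ua'$, into that empty face: this removes the crossing at $w$ and introduces none, contradicting minimality. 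By symmetry $v$ is not a $2$-vertex either.

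For part (c), let $v$ be the $3$-vertex, adjacent to false $w_1,w_2$ and to a third neighbour $x$, and let $F_1,F_2,F_3$ be its three incident faces with $F_1$ between $vw_1$ and $vw_2$. Here $x$ must be true, since otherwise (a) forbids all three faces from being triangles. The face $F_1$ is not a $3$-face (else $w_1w_2\in E(\gx)$, contradicting (a)), so the two given $3$-faces are $F_2=vw_2x$ and $F_3=vxw_1$. It remains to rule out $F_1$ being a $4$-face $vw_1zw_2$; computing the crossing pairs at $w_1$ and at $w_2$ from their cyclic orders, both crossings involve the same $G$-edge $zx$, so $zx$ would be crossed twice. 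Hence $F_1$ is a $5^+$-face incident to $v$. For part (d), with $d_{\gx}(u)=3$, $v$ false, and $uv$ bounding two $3$-faces $uva,uvb$, the edges $ua,ub$ are uncrossed edges of $G$ and at $v$ the crossing is between $ab$ and $uc$. Letting $F_3$ be the third face at $u$ (between $ua$ and $ub$, with both $a$ and $b$ on its boundary), the plan is to reroute $ab$ through the interior of $F_3$: this meets no edge yet deletes the crossing of $ab$ with $uc$, once more contradicting minimality.

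The main obstacle in every part is the same piece of bookkeeping: reading off the alternating cyclic order of half-edges at each false vertex so as to identify which two $G$-edges truly cross there and, in (b) and (d), to locate the single empty face through which a vertex or edge may be slid. Once these cyclic orders are fixed, each case reduces either to an edge crossed twice or to a strictly crossing-reducing move; the only delicate verification is that the relocations in (b) and (d) stay within one face and therefore create no new crossing.
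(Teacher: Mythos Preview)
Your proof is correct. The paper itself does not prove this lemma at all; it is quoted from \cite{ZW11} (Zhang and Wu, 2011), so there is no in-paper argument to compare against. Your two mechanisms---forcing a double crossing, and performing a crossing-reducing redrawing under the blanket assumption of a minimum-crossing $1$-plane drawing---are exactly the standard ones, and they are the mechanisms used in the original source as well.

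A few small points worth tightening. In part~(c), your sentence ``$x$ must be true, since otherwise (a) forbids all three faces from being triangles'' is slightly loose: the hypothesis gives only two $3$-faces, not three. The correct phrasing is that if all three neighbours of $v$ were false, then \emph{no} face at $v$ could be a $3$-face (any such triangle would force a false--false edge), contradicting the hypothesis; hence exactly two neighbours are false and the third, $x$, is true. In the $4$-face analysis you should also note explicitly that $z$ is true: the $G$-edge through $w_1$ containing the half-edges $w_1x$ and $w_1z$ is crossed at $w_1$, so by $1$-planarity it is crossed nowhere else, forcing $z$ to be a true endpoint; only then does ``$xz$ is a single $G$-edge crossed at both $w_1$ and $w_2$'' follow from simplicity of $G$. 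In part~(d), your claim that $a,b$ are true and $ua,ub$ are uncrossed deserves one line of justification: the $G$-edge through $v$ with halves $va,vb$ is already crossed at $v$, so $a$ and $b$ cannot be further crossings; and an edge of $\gx$ between two true vertices is by construction an uncrossed edge of $G$. With these clarifications the argument is complete.
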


A bipartite subgraph $F$ of $G$ is a \emph{$k$-alternator} of $G$ with partite sets $X, Y$ for some $2\leq k\leq \lfloor\frac{\Delta(G)}{2}\rfloor$ if $d_F(x)=d_G(x)\leq k$ for each $x\in X$, and $d_F(y)\geq d_G(y)+k-\Delta(G)$ for each $y\in Y$.

A bipartite subgraph $F$ of $G$ is a \emph{$k$-alternating subgraph} of $G$ with partite sets $X, Y$ for some $2\leq k\leq \lfloor\frac{\Delta(G)}{2}\rfloor$ if $d_F(x)=d_G(x)\leq k$ for each $x\in X$, and $d_F(y)\geq k$ for each $y\in Y$.

\begin{lem}\cite[Lemma 2.4]{WW08} (resp.\,\cite[Lemma 7]{HWYZ17})\label{master}
Let $2\leq k\leq \lfloor\frac{\Delta}{2}\rfloor$ be a fixed integer and let $G$ be a graph without $k$-alternator (resp.\,$k$-alternating subgraph).
Let $X_k=\{x\in V(G)~|~d_G(x)\leq k\}$ and $Y_k=\bigcup_{x\in X_k}N_G(x)$. If
$X_k\not=\emptyset$, then there exists a bipartite subgraph $M_k$ of $G$ with partite sets $X_k, Y_k$ such that $d_{M_k}(x)=1$ for each $x\in
X_k$ and $d_{M_k}(y)\leq k-1$ for each $y\in Y_k$.
\end{lem}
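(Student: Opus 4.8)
The plan is to prove the contrapositive by an extremal (deficiency / alternating path) argument that handles both versions at once. It is convenient to read the desired object $M_k$ as an \emph{assignment}: since $d_{M_k}(x)=1$ for every $x\in X_k$, we must send each small vertex $x$ to one neighbor $f(x)\in N_G(x)\subseteq Y_k$, and the condition $d_{M_k}(y)\le k-1$ says that no vertex of $Y_k$ is chosen by more than $k-1$ of the $x$'s. Formally this is a subgraph, with left-degrees $1$ and right-degrees at most $k-1$, of the associated bipartite graph $H$ having sides $X_k$ and $Y_k$ and an edge $xy$ whenever $xy\in E(G)$; working in $H$ also absorbs the harmless technicality that a vertex may belong to both $X_k$ and $Y_k$. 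We may assume $G$ has no isolated vertices, as otherwise $X_k$ contains a vertex that cannot be assigned. Among all subgraphs $M$ of $H$ with $d_M(x)\le 1$ $(x\in X_k)$ and $d_M(y)\le k-1$ $(y\in Y_k)$, I would fix one with the most edges, equivalently saturating the most left-vertices. If $M$ saturates all of $X_k$ it is the required $M_k$; so suppose some $x_0\in X_k$ is unsaturated.

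I would then explore the usual alternating structure from the set $B\neq\emptyset$ of unsaturated left-vertices: from a left-vertex follow its non-matching edges to reach right-vertices, and from a right-vertex $y$ with $d_M(y)=k-1$ follow its $k-1$ matching edges back to its partners. Let $R_X\subseteq X_k$ and $R_Y\subseteq Y_k$ be the reachable left- and right-vertices. Maximality of $M$ means there is no augmenting path, so every reachable right-vertex is already saturated, i.e.\ $d_M(y)=k-1$ for all $y\in R_Y$ (a reachable $y$ with spare capacity would let us re-route and saturate $x_0$). Moreover every neighbor of a reachable left-vertex is reachable: the unique matching edge at a saturated $x\in R_X$ leads back into $R_Y$, and all its other incident edges are non-matching and hence explored, so $N_G(x)\subseteq R_Y$ and $d_G(x)=|N_G(x)\cap R_Y|$ for $x\in R_X$.

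The step I expect to be the crux is a lower bound of exactly $k$ (not merely $k-1$) on $|N_G(y)\cap R_X|$ for $y\in R_Y$. Each such $y$ is reached along a \emph{non-matching} edge $x^{\sharp}y$ from some $x^{\sharp}\in R_X$, whereas its $k-1$ matching edges reach $k-1$ further vertices of $R_X$; because $G$ is simple, $x^{\sharp}$ differs from all of these matching partners, so
\[
|N_G(y)\cap R_X|\ \ge\ (k-1)+1\ =\ k .
\]
This extra unit, supplied by the reaching edge, is precisely what lifts the naive bound $k-1$ to $k$ and makes the construction tight; getting this bookkeeping right is the heart of the proof.

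Finally I would take $F$ to be the subgraph of $G$ formed by all edges between $R_X$ and $R_Y$, with partite sets $X=R_X$ and $Y=R_Y$, both nonempty since $B\neq\emptyset$ and $G$ has no isolated vertices. For $x\in R_X$ we get $d_F(x)=d_G(x)\le k$ because $R_X\subseteq X_k$ and $N_G(x)\subseteq R_Y$, while for $y\in R_Y$ the displayed inequality gives $d_F(y)=|N_G(y)\cap R_X|\ge k$. The bound $d_F(y)\ge k$ exhibits $F$ as a $k$-alternating subgraph, and since $d_G(y)\le\Delta(G)$ it also yields $d_F(y)\ge k\ge d_G(y)+k-\Delta(G)$, so the same $F$ is a $k$-alternator. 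Either reading contradicts the hypothesis that $G$ has no $k$-alternator (resp.\ no $k$-alternating subgraph). Hence no unsaturated $x_0$ can exist, the extremal $M$ saturates all of $X_k$, and it is the desired $M_k$.
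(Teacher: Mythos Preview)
The paper does not give its own proof of this lemma; it is quoted from \cite{WW08} and \cite{HWYZ17}, with only a remark that the upper bound on $k$ in the latter can be relaxed without changing the proof. Your argument is the standard augmenting-path (capacitated Hall/K\"onig) proof for degree-constrained bipartite subgraphs, which is exactly what those references contain: take a maximum semi-matching in the auxiliary bipartite graph $H$, explore alternating walks from the unsaturated left side, and read off the alternator from the reachable sets $R_X,R_Y$. Your identification of the ``$+1$'' coming from the non-matching reaching edge as the crux is precisely right --- it is what upgrades the naive bound $d_F(y)\ge k-1$ to $d_F(y)\ge k$ and hence yields the stronger $k$-alternating-subgraph conclusion, not just the $k$-alternator one.

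One small point worth a sentence. When you pass from $H$ back to $G$ to form $F$, you need $R_X$ and $R_Y$ to be disjoint as subsets of $V(G)$, not merely of $V(H)$, so that $(R_X,R_Y)$ is an honest bipartition. You call the overlap ``harmless'' but do not say why. It is indeed harmless: if some $v$ had $v_L\in R_X$ and $v_R\in R_Y$, then your own inequalities force $d_G(v)=k$ and, by propagation, every neighbor of $v$ also lies in the overlap with degree $k$; so the connected component of $v$ is $k$-regular. But in a $k$-regular component $C$ the semi-matching always saturates the left side (for any $S\subseteq C$ one has $|S|\le|N_C(S)|\le(k-1)|N_C(S)|$, so the Hall condition for right-capacity $k-1$ holds), hence $C$ contributes nothing to $B$ and, being a component, cannot be reached from outside. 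Thus the overlap is empty and $F$ is genuinely bipartite. This is a routine verification rather than a gap in the idea.
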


\noindent \textbf{Remark:} The second result (while $k$-alternating subgraph is forbidden in $G$) of the above lemma comes from the first three paragraphs of the proof of Lemma 7 in \cite{HWYZ17}. Although $k$ is assumed to be at most 5 in \cite{HWYZ17}, the upper bound for $k$ can actually be relaxed to $\lfloor\frac{\Delta}{2}\rfloor$ without changing any word in their proof.

Following Lemma \ref{master}, we call $y$ the $k$-$master$ of $x$ if $xy\in M_k$ and $x\in X_k$. By Lemma \ref{master}, we conclude that
\begin{align}\label{1}
each~d\texttt{-}vertex~(2\leq d\leq \big\lfloor\frac{\Delta}{2}\big\rfloor)~has~a~k\texttt{-}master~for~each~d\leq k\leq \big\lfloor\frac{\Delta}{2}\big\rfloor
\end{align}
and
\begin{align}\label{2}
each ~vertex ~of~G~may ~be ~a ~k\texttt{-}master ~(2\leq k\leq \big\lfloor\frac{\Delta}{2}\big\rfloor)~of ~at ~most ~k-1 ~vertices.
\end{align}

\begin{thm}\label{str-1-g3}
If $G$ is a 1-planar graph with minimum degree at least 2,
then $G$ contains

(a) an edge $xy$ with $d_G(x)\leq 5$ and $d_G(x)+d_G(y)\leq 19$, or

(b) an edge $xy$ with $d_G(x),d_G(y)\geq 6$ and $d_G(x)+d_G(y)\leq 16$, or

(c) a $k$-alternator (resp.\,$k$-alternating subgraph) for some $k\in \{2,3,4,5\}$.
\end{thm}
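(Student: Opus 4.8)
The plan is to argue by contradiction via the discharging method applied to the associated plane graph $\gx$. Suppose $G$ is a 1-planar graph with $\delta(G)\geq 2$ containing none of the configurations (a), (b), (c); fix a 1-plane drawing and form $\gx$. Forbidding (a) means that a neighbour of a $d$-vertex with $d\leq 5$ has degree at least $20-d$ (so at least $15$, and at least $18$ for a $2$-vertex), while forbidding (b) means that two adjacent vertices of degree at least $6$ have degree sum at least $17$. A useful first reduction is that $\Delta(G)\geq 10$: since a 1-planar graph satisfies $|E(G)|\leq 4|V(G)|-8$, some vertex has degree at most $7$, and the two adjacency restrictions just derived push its neighbours, hence $\Delta(G)$, up to at least $10$ (small graphs satisfy (a) trivially). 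Consequently $\lfloor\Delta/2\rfloor\geq 5$, so forbidding (c) lets us apply Lemma \ref{master} for every $k\in\{2,3,4,5\}$; this supplies the master matchings $M_k$ together with the counting facts \eqref{1} and \eqref{2}, which are the engine for feeding charge into small vertices. The alternator and the alternating-subgraph versions are handled in parallel, each using the corresponding form of Lemma \ref{master}.

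Next I would set up charges on $\gx$. Since $\gx$ is a plane graph, assigning $\mu(z)=d_{\gx}(z)-4$ to every vertex and every face $z$ gives $\sum_z\mu(z)=-8$ by Euler's formula. Here false vertices and $4$-faces carry charge $0$, every true vertex inherits $\mu(v)=d_G(v)-4$, and the only elements with negative charge are $2$-vertices (charge $-2$), $3$-vertices (charge $-1$) and $3$-faces (charge $-1$), while every vertex of degree at least $5$ and every $5^{+}$-face is a donor. The goal is to redistribute charge by local rules so that every element finishes with non-negative charge, contradicting the total $-8$.

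The discharging rules would move charge along two channels: (i) from each high-degree true vertex to the low-degree vertices it masters, in amounts calibrated so that a $k$-master never overspends, since it has at most $k-1$ slaves by \eqref{2}, and so that a $2$- or $3$-vertex, whose neighbours are forced to be large by the failure of (a), collects its whole deficit from the masters guaranteed by \eqref{1}; and (ii) face-based rules, whereby $5^{+}$-faces and large incident true vertices pay off incident $3$-faces and low-degree vertices, with false vertices (charge $0$) acting as conduits between their incident faces. The thresholds $19$ and $16$ appearing in (a) and (b) are precisely what make the per-element bookkeeping balance.

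The hard part will be the final verification for the needy elements, and especially for $3$-faces and the vertices on them. For a true $3$-face the failure of (a) and (b) guarantees that at least two incident vertices are large enough to cover it, but for a false $3$-face one must combine all of Lemma \ref{n.adj}: part (a) forces exactly one false and two true vertices on such a face, part (b) excludes $2$-vertices, and parts (c) and (d) rule out or redirect the dangerous configuration in which a $3$-vertex lies on two $3$-faces next to false vertices, guaranteeing a nearby $5^{+}$-face to draw on. Reconciling the simultaneous demands placed on a small vertex by its masters (channel (i)) and by its incident faces (channel (ii)) without double-counting is where the case analysis concentrates, and the exact numerical choices in the rules must be tuned against the bounds $15$, $17$, $18$ extracted from (a) and (b); this is the step I expect to be the main obstacle.
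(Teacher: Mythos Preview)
Your plan matches the paper's proof: both argue by contradiction via discharging on $\gx$ after reducing to $\Delta(G)\geq 10$, invoking Lemma~\ref{master} for each $k\in\{2,3,4,5\}$ to set up the master transfers, and leaning on Lemma~\ref{n.adj} for the delicate $3$-face configurations around false vertices. The only notable difference is the initial charge---the paper uses $d_{\gx}(v)-6$ on vertices and $2d_{\gx}(f)-6$ on faces (so $3$-faces start at $0$ while false vertices start at $-2$ and are fed by their incident faces), rather than your $d-4$ on both; this merely relocates the deficit and would lead to a different but parallel set of numerical rules and case checks.
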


\begin{proof}
Suppose, to the contrary, that $G$ is a minimal counterexample (in terms of $|V(G)|+|E(G)|$) to this theorem. Clearly, $G$ is connected.

If $\Delta(G)\leq9$, then choose an edge $uv$ of $G$ such that $d_G(u)=\delta(G)$. Since $G$ is a 1-planar graph, $\delta(G)\leq 7$ (see \cite{FM07}). This implies that $d_G(u)+d_G(v)\leq \delta(G)+\Delta(G)\leq 16<19$. Hence configuration (a) or (b) occurs in $G$, a contradiction.

Hence, $\Delta(G)\geq 10$.
By \eqref{1} and the absence of the configuration (c), each  $d$-vertex with $2\leq d\leq 5$ (if it exists) of $G$ has a $k$-master for each $d\leq k\leq 5$.

We apply the discharging method to the associated plane graph $\gx$ of $G$. Formally, for each vertex $v\in V(\gx)$, let $c(v):=d_{\gx}(v)-6$ be its initial charge, and for each face $f\in F(\gx)$, let $c(f):=2d_{\gx}(f)-6$ be its initial charge. Clearly, $\sum_{x\in V(\gx)\cup F(\gx)}c(x)=-12<0$ by the well-known Euler's formula.

In what follows, we call a true vertex of $\gx$ \emph{big} if $d_{\gx}(G)\geq 9$, and \emph{small} if $d_{\gx}(G)\leq 8$. Since $(a)$ and $(b)$ are forbidden in $G$, any two small vertices are not adjacent in $G$.
We use $F,B$ and $S$ to represent false vertex, big vertex and small vertex, respectively, and then use these notations to represent the structure of a face of $\gx$. For example, we say that a face is an $(F,S,B,S)$-face if it is a 4-face with vertices $u_1,u_2,u_3$ and $u_4$ lying cyclically on the boundary of $f$ such that $u_1$ is false, $u_2$ is small, $u_3$ is big, and $u_4$ is small.

If a face $f\in F(\gx)$ is incident with a false vertex $u$ so that the two neighbors of $u$ in the subgraph induced by the edges of $f$ are big vertices, then $u$ is a \emph{hungry false vertex} incident with $f$.
A face in $\gx$ is \emph{burdened} if it is incident with at least one small vertex.

We define discharging rules as follows.

\begin{description}\vspace{-.6em}
  \item[R1] every big vertex of $\gx$ sends $\frac{1}{3}$ to each of its incident faces. \vspace{-.8em}
  \item[R2] every $4^+$-face of $\gx$ sends $\frac{4}{3}$ to each of its incident hungry false vertices, and $\frac{2}{3}$ to each of its incident false vertices that are not hungry. \vspace{-.8em}
  \item[R3] every false 3-face of $\gx$ sends all of its received charge after applying R1 to its incident false vertex. \vspace{-.8em}
  \item[R4] every true 3-face of $\gx$ sends all of its received charge after applying R1 to its incident small vertex (if it exists). \vspace{-.8em}
  \item[R5] every $4^+$-face of $\gx$ redistributes it remaining charge after applying R1 and R2 equitably to each of its incident small vertices (if it exists). \vspace{-.8em}
  \item[R6] every 2-vertex of $G$ receives $\frac{2}{3},\frac{1}{2},\frac{1}{2}$ and $\frac{2}{3}$ from its 2-master, 3-master, 4-master and 5-master, respectively. \vspace{-.8em}
  \item[R7] every 3-vertex of $G$ receives $\frac{1}{2},\frac{1}{2}$ and $\frac{2}{3}$ from its 3-master, 4-master and 5-master, respectively. \vspace{-2.2em}
  \item[R8] every 4-vertex of $G$ receives $\frac{1}{2}$ and $\frac{2}{3}$ from its 4-master and 5-master, respectively. \vspace{-.8em}
  \item[R9] every 5-vertex of $G$ receives $\frac{2}{3}$ from its 5-master. \vspace{-.8em}
\end{description}

Here one shall note that if $uv\in E(G)$ and $2\leq d_G(v)\leq 5$, then $u$ may simultaneously be a $k$-master of $v$ for several values $k$ with $d_G(v)\leq k\leq 5$.

Let $c'(x)$ be the charge of $x\in V(\gx)\cup F(\gx)$ after applying the above rules. Since our rules only move charge around, and do not affect
the sum, we have $$\sum_{x\in V(\gx)\cup F(\gx)}c'(x)=\sum_{x\in V(\gx)\cup F(\gx)}c(x)<0.$$
Next, we prove
that $c'(x)\geq 0$ for each $x\in V(\gx)\cup F(\gx)$. This leads to $\sum_{x\in V(\gx)\cup F(\gx)}c'(x)\geq 0$, a contradiction.

Since every $4^+$-face $f$ of $F(\gx)$ is incident with at most $\lfloor\frac{d_{\gx}(f)}{2}\rfloor$ false vertices by Lemma \ref{n.adj}(a), the charge of $f$ after applying R2 is at least $2d_{\gx}(f)-6-\frac{4}{3}\lfloor\frac{d_{\gx}(f)}{2}\rfloor>0$ for $d_{\gx}(f)\geq 5$. On the other hand, if $f$ is a 4-face incident with at least one hungry false vertex, then it is incident with at least two big vertices and thus $c'(f)\geq 2\times 4-6+2\times \frac{1}{3}-2\times \frac{4}{3}=0$ by R1 and R2, and if $f$ is a 4-face incident with none hungry false vertex, then $c'(f)\geq 2\times 4-6-2\times \frac{2}{3}>0$ by R2.
Hence, R1--R5 guarantee that $c'(f)\geq 0$ for each $f\in F(\gx)$.

By R1, R3 and R4, it is easy to conclude the following three claims.

\begin{clm}\label{clm1}
Every $(F,B,B)$-face sends $\frac{2}{3}$ to its incident false vertex. \hfill$\square$
\end{clm}

\begin{clm}\label{clm2}
Every $(F,B,S)$-face sends $\frac{1}{3}$ to its incident false vertex. \hfill$\square$
\end{clm}

\begin{clm}\label{clm3}
Every burdened true 3-face  sends $\frac{2}{3}$ to its incident small vertex. \hfill$\square$
\end{clm}

Now we consider burdened $4^+$-faces.

\begin{clm}\label{clm4}
Every burdened 4-face sends to each of its incident small vertices $\frac{1}{3}$ if $f$ is an $(F,S,F,S)$-face, $\frac{5}{6}$ if $f$ is an $(F,S,B,S)$-face, $1$ if $f$ is an $(F,S,F,B)$-face,
and at least $\frac{4}{3}$ otherwise.
\end{clm}

\begin{proof}
If $f$ is an $(F,S,F,S)$-face, then the false vertices incident with $f$ are not hungry, and thus by R2 and R5, $f$ sends $\frac{1}{2}\times(2\times 4-6-2\times\frac{2}{3})=\frac{1}{3}$ to each of its incident small vertices.

If $f$ is an $(F,S,B,S)$-face, then the false vertex incident with $f$ is not hungry, and thus by R1, R2 and R5, $f$ sends $\frac{1}{2}\times(2\times 4-6+\frac{1}{3}-\frac{2}{3})=\frac{5}{6}$ to each of its incident small vertices.

If $f$ is an $(F,S,F,B)$-face, then the false vertices incident with $f$ are not hungry, and thus by R1, R2 and R5, $f$ sends $2\times 4-6+\frac{1}{3}-2\times\frac{2}{3}=1$ to its incident small vertex.

By symmetry, $f$ can be of another types among $(S,B,B,B),(S,B,S,B),(F,S,B,B)$ and $(F,B,S,B)$. In each case we can similarly calculate that $f$ sends at least $\frac{4}{3}$ to each of its incident small vertices.
\end{proof}

\begin{clm}\label{clm5}
Every burdened $5^+$-face sends at least $\frac{4}{3}$ to each of its incident small vertices.
\end{clm}

\begin{proof}

If $f$ is not incident with hungry false vertex, then $f$ is incident with at most $\lfloor\frac{d_{\gx}(f)}{2}\rfloor$ false vertices and at most $\lfloor\frac{d_{\gx}(f)}{2}\rfloor$ small vertices. Hence $f$ sends at least $(2d_{\gx}(f)-6-\frac{2}{3}\lfloor\frac{d_{\gx}(f)}{2}\rfloor)/\lfloor\frac{d_{\gx}(f)}{2}\rfloor\geq \frac{4}{3}$ to each of its incident small vertices by R2 and R5.

If $f$ is incident with a hungry false vertex, then $f$ is incident with at most $\lfloor\frac{d_{\gx}(f)}{2}\rfloor-1$ hungry false vertices (otherwise $f$ is not burdened) and at most $\lceil\frac{d_{\gx}(f)-3}{2}\rceil$ small vertices. By R1, R2 and R5, $f$ sends at least
$(2d_{\gx}(f)-6-\frac{4}{3}(\lfloor\frac{d_{\gx}(f)}{2}\rfloor-1)-\frac{2}{3})/\lceil\frac{d_{\gx}(f)-3}{2}\rceil\geq \frac{4}{3}$ to each of its incident small vertices.
\end{proof}

Now we calculate the final charge of each vertex $v\in V(\gx)$.

\textbf{Case 1.} $v$ is a false vertex.

If $v$ is incident with at least three $(F,B,B)$-faces, then by Claim \ref{clm1}, $c'(v)\geq 4-6+3\times\frac{2}{3}=0$.

If $v$ is incident with exactly two $(F,B,B)$-faces, then each of another two faces that are incident with $v$ is an $(F,B,S)$-face or a $4^+$-face. Hence by Claims \ref{clm1}, \ref{clm2} and R2, we have $c'(v)\geq 4-6+2\times \frac{2}{3}+2\times\min\{\frac{1}{3},\frac{2}{3}\}=0$.

If $v$ is incident with exactly one $(F,B,B)$-face, then $v$ is incident with at least one $4^+$-face, because otherwise $v$ is incident with an $(F,S,S)$-face, which is impossible since small vertices are not adjacent in $G$. Under this condition, by Claims \ref{clm1}, \ref{clm2} and R2, we have $c'(v)\geq 4-6+\frac{2}{3}+2\times\min\{\frac{1}{3},\frac{2}{3}\}+\frac{2}{3}=0$.

If $v$ is incident with none $(F,B,B)$-face, then $v$ is incident with at least two $4^+$-faces, because otherwise $v$ is incident with an $(F,S,S)$-face, which is impossible since small vertices are not adjacent in $G$. Under this condition, by Claims \ref{clm1}, \ref{clm2} and R2, we have $c'(v)\geq 4-6+2\times\min\{\frac{1}{3},\frac{2}{3}\}+2\times \frac{2}{3}=0$.

\textbf{Case 2.} $v$ is a 2-vertex.

By Lemma \ref{n.adj}(b), $v$ is not incident with a false 3-face, and by R6, $v$ receives $\frac{2}{3},\frac{1}{2},\frac{1}{2}$ and $\frac{2}{3}$ from its 2-master, 3-master, 4-master and 5-master, respectively.

If $v$ is incident with a true 3-face, then $v$ is adjacent to two big vertices in $\gx$, and the other face $f$ incident with $v$ is either a $5^+$-face, or an $(F,B,S,B)$-face, or a $(B,B,S,B)$-face, or a $(S,B,S,B)$-face. In either case, $f$ sends at least $\frac{4}{3}$ to $v$ by Claims \ref{clm4} and \ref{clm5}. Hence $c'(v)\geq 2-6+\frac{2}{3}+\frac{4}{3}+\frac{2}{3}+\frac{1}{2}+\frac{1}{2}+\frac{2}{3}>0$ by
Claim \ref{clm3}.

If $v$ is incident with two $4^+$-faces, one of which is a $5^+$-face, then $c'(v)\geq 2-6+\frac{1}{3}+\frac{4}{3}+\frac{2}{3}+\frac{1}{2}+\frac{1}{2}+\frac{2}{3}=0$ by Claims \ref{clm4} and \ref{clm5}.

If $v$ is incident with two $4$-faces, then none of the two 4-faces incident with $v$ is an $(F,S,F,S)$-face (otherwise a multi-edge appears in $G$). This implies
$c'(v)\geq 2-6+2\times\frac{5}{6}+\frac{2}{3}+\frac{1}{2}+\frac{1}{2}+\frac{2}{3}=0$ by Claim \ref{clm4}.

\textbf{Case 3.} $v$ is a 3-vertex.

By R7, $v$ receives $\frac{1}{2},\frac{1}{2}$ and $\frac{2}{3}$ from its 3-master, 4-master and 5-master, respectively.

If $v$ is incident with a $5^+$-face, then $c'(v)\geq 3-6+\frac{4}{3}+\frac{1}{2}+\frac{1}{2}+\frac{2}{3}=0$ by Claim \ref{clm5}.

If $v$ is incident with three $4$-faces, then at most one of them is an $(F,S,F,S)$-face (otherwise two small vertices are adjacent in $G$). Therefore,
$c'(v)\geq 3-6+\frac{1}{3}+2\times\frac{5}{6}+\frac{1}{2}+\frac{1}{2}+\frac{2}{3}>0$
by Claim \ref{clm4}.

If $v$ is incident with two 4-faces and one 3-face, then the two 4-faces incident with $v$ cannot be both of $(F,S,F,S)$-type. If none of them is of $(F,S,F,S)$-type, then
$c'(v)\geq 3-6+2\times\frac{5}{6}+\frac{1}{2}+\frac{1}{2}+\frac{2}{3}>0$ by Claim \ref{clm4}. If one of them is of type $(F,S,F,S)$, then the other one is of type $(F,B,B,S)$. This implies $c'(v)\geq 3-6+\frac{1}{3}+\frac{4}{3}+\frac{1}{2}+\frac{1}{2}+\frac{2}{3}>0$
by Claim \ref{clm4}.

If $v$ is incident with one $4$-face and two 3-faces, then the $4$-face incident with $v$ is not of $(F,S,F,S)$-type (otherwise a multi-edge occurs in $G$). If $v$ is incident with a true 3-face, then $c'(v)\geq 3-6+\frac{2}{3}+\frac{5}{6}+\frac{1}{2}+\frac{1}{2}+\frac{2}{3}>0$ by Claims \ref{clm3} and \ref{clm4}. If $v$ is incident with two false 3-faces, then by Lemmas \ref{n.adj}(c) and \ref{n.adj}(d), $v$ is adjacent to two false vertices and incident with a $5^+$-face, which is impossible in this case.

If $v$ is incident with three 3-faces, then by Lemma \ref{n.adj}(d), all of those 3-faces are true. This implies $c'(v)\geq 3-6+3\times \frac{2}{3}+\frac{1}{2}+\frac{1}{2}+\frac{2}{3}>0$ by Claim \ref{clm3}.

\textbf{Case 4.} $v$ is a true 4-vertex.

By R8, $v$ receives $\frac{1}{2}$ and $\frac{2}{3}$ from its 4-master and 5-master, respectively.

If $v$ is incident with at least one $5^+$-face, then $c'(v)\geq 4-6+\frac{4}{3}+\frac{1}{2}+\frac{2}{3}>0$ by Claim \ref{clm5}. Therefore we assume that $v$ is incident only with $4^-$-faces.

If $v$ is incident with four 3-faces, then at least two of them are true ones (otherwise two false vertices are adjacent in $\gx$ or there exists a multi-edge in $G$). Hence $c'(v)\geq 4-6+2\times\frac{2}{3}+\frac{1}{2}+\frac{2}{3}>0$ by Claim \ref{clm3}.

If $v$ is incident with at least three $4$-faces, then $c'(v)\geq 4-6+3\times\frac{1}{3}+\frac{1}{2}+\frac{2}{3}>0$ by Claim \ref{clm4}.

If $v$ is incident with at exactly two $4$-faces, then at least one of them is not of $(F,S,F,S)$-type, which implies $c'(v)\geq 4-6+\frac{1}{3}+\frac{5}{6}+\frac{1}{2}+\frac{2}{3}>0$ by Claim \ref{clm4}.

If $v$ is incident with exactly one $4$-face and this 4-face is not of $(F,S,F,S)$-type, then $c'(v)\geq 4-6+\frac{5}{6}+\frac{1}{2}+\frac{2}{3}=0$ by Claim \ref{clm4}.

If $v$ is incident with one $(F,S,F,S)$-face and three 3-faces, then $v$ is incident with a true 3-face. This implies that $c'(v)\geq 4-6+\frac{1}{3}+\frac{2}{3}+\frac{1}{2}+\frac{2}{3}>0$ by Claims \ref{clm3} and \ref{clm4}.

\textbf{Case 5.} $v$ is a 5-vertex.

By R9, $v$ receives $\frac{2}{3}$ from its 5-master.

If $v$ is incident with at least one $4^+$-face, then $c'(v)\geq 5-6+\frac{1}{3}+\frac{2}{3}=0$ by Claim \ref{clm4}.

If $v$ is incident with five 3-faces, then at least one of them is true, which implies $c'(v)\geq 5-6+\frac{2}{3}+\frac{2}{3}>0$ by Claim \ref{clm3}.

\textbf{Case 6.} $v$ is a vertex of degree between 6 and 14.

By the absence of the configuration (a), every $5^-$-vertex is adjacent only to $15^+$-vertex in $G$. Therefore, $v$ cannot be a master of any vertex. If $v$ is a small vertex, then $v$ does not give out any charge by R1--R9, and thus $c'(v)=c(v)=d_{\gx}(v)-6\geq 0$. If $v$ is a big vertex, that is, $d_{\gx}(v)\geq 9$, then by R1, $c'(v)\geq d_{\gx}(v)-6-\frac{1}{3}d_{\gx}(v)=\frac{1}{3}(2d_{\gx}(v)-18)\geq 0$.

\textbf{Case 7.} $v$ is a 15-vertex.

By the absence of the configuration (a), $v$ is adjacent only to $5^+$-vertex in $G$. Therefore, by \eqref{2}, $v$ can be a 5-master of at most four vertices , and cannot be a 4-master, or a 3-master, or a 2-master of any vertex. By R1 and R9,
$c'(v)\geq 15-6-\frac{1}{3}\times 15-4\times \frac{2}{3}>0$.

\textbf{Case 8.} $v$ is a 16-vertex.

By the absence of the configuration (a), $v$ is adjacent only to $4^+$-vertex in $G$. Therefore, by \eqref{2}, $v$ can be a 5-master of at most four vertices, a 4-master of at most three vertices, and cannot be a 3-master or a 2-master of any vertex. By R1, R8 and R9,
$c'(v)\geq 16-6-\frac{1}{3}\times 16-4\times \frac{2}{3}-3\times \frac{1}{2}>0$.

\textbf{Case 9.} $v$ is a 17-vertex.

By the absence of the configuration (a), $v$ is adjacent only to $3^+$-vertex in $G$. Therefore, by \eqref{2}, $v$ can be a 5-master of at most four vertices, a 4-master of at most three vertices, a 3-master of at most two vertices, and cannot be a 2-master of any vertex. By R1, R7, R8 and R9,
$c'(v)\geq 17-6-\frac{1}{3}\times 17-4\times \frac{2}{3}-3\times \frac{1}{2}-2\times\frac{1}{2}>0$.

\textbf{Case 10.} $v$ is a $18^+$-vertex.

By \eqref{2}, $v$ can be a 5-master of at most four vertices, a 4-master of at most three vertices, a 3-master of at most two vertices, and a 2-master of at most one vertex. By R1, R6, R7, R8 and R9,
$c'(v)\geq d_{\gx}(v)-6-\frac{1}{3} d_{\gx}(v)-4\times \frac{2}{3}-3\times \frac{1}{2}-2\times\frac{1}{2}-\frac{2}{3}=\frac{1}{6}(4d_{\gx}(v)-71)>0$.
\end{proof}

%
%

\section{List edge and list total coloring}

A \emph{critical edge $M$-choosable graph} (resp.\,\emph{critical }\emph{total $(M+1)$-choosable graph}) is a graph with maximum degree at most $M$ such that $G$ is not edge $M$-choosable (resp.\,total $(M+1)$-choosable), and any proper subgraph of $G$ is edge $M$-choosable (resp.\,total $(M+1)$-choosable). The structures of such critical graphs were investigated by Wu and Wang \cite{WW08}, who proved the following two useful results.

\begin{lem}\cite[Lemma 2.2]{WW08}\label{ww-lem-1}
If $G$ is a critical edge $M$-choosable graph (resp.\,critical total $(M+1)$-choosable graph), then for every edge $xy\in E(G)$ with $d_G(x)\leq \lfloor\frac{M}{2}\rfloor$, we have $d_G(x)+d_G(y)\geq M+2$.
\end{lem}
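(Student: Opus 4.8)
The plan is to argue by contradiction through the defining property of criticality: every \emph{proper} subgraph of $G$ is colorable in the relevant sense. So I would suppose that some edge $xy$ with $d_G(x)\leq\lfloor\frac{M}{2}\rfloor$ nonetheless satisfies $d_G(x)+d_G(y)\leq M+1$, and then show that $G$ itself is edge $M$-choosable (resp. total $(M+1)$-choosable), contradicting that $G$ is a critical non-colorable graph. Concretely, I would fix such an edge $xy$ together with an arbitrary list assignment $L$ giving each relevant element a list of the prescribed size ($M$ on every edge in the edge case; $M+1$ on every vertex and every edge in the total case), and try to build an $L$-coloring by a delete-and-extend (greedy) computation.

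For the edge choosability case I would delete the edge $xy$ and invoke criticality directly: $G-xy$ is a proper subgraph of maximum degree at most $M$, hence edge $M$-choosable, so it admits an edge-$L$-coloring. The only element still to be colored is $xy$, and the colors forbidden on it are exactly those used on its adjacent edges, of which there are at most $(d_G(x)-1)+(d_G(y)-1)=d_G(x)+d_G(y)-2\leq M-1$. Since $|L(xy)|=M$, a color survives and the coloring extends to all of $G$, the desired contradiction. Note that in this case the bound $d_G(x)\leq\lfloor\frac{M}{2}\rfloor$ is not even used; the hypothesis $d_G(x)+d_G(y)\leq M+1$ alone suffices.

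For the total choosability case the same deletion handles the generic situation easily: color $G-xy$ totally by criticality, after which $xy$ is forbidden the at most $d_G(x)+d_G(y)-2$ colors of its adjacent edges together with the two colors on $x$ and $y$, i.e. at most $d_G(x)+d_G(y)$ colors, which is at most $M$ whenever $d_G(x)+d_G(y)\leq M$. The delicate point, and the main obstacle, is the boundary case $d_G(x)+d_G(y)=M+1$, where these $M+1$ forbidden colors could exhaust the list $L(xy)$. To recover one degree of freedom I would additionally \emph{uncolor} the vertex $x$ after totally coloring $G-xy$, obtaining a partial total coloring that I then extend to $G$. Now $xy$ sees only the at most $d_G(x)+d_G(y)-2$ adjacent edge colors and the single color on $y$, at most $d_G(x)+d_G(y)-1=M$ colors, so it can be colored from its list of size $M+1$. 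It remains to recolor $x$, which must avoid the colors of its $d_G(x)$ neighbors and of its $d_G(x)$ incident edges (including the freshly colored $xy$), hence at most $2d_G(x)$ colors; and here the hypothesis $d_G(x)\leq\lfloor\frac{M}{2}\rfloor$ is precisely what forces $2d_G(x)\leq M<M+1=|L(x)|$, leaving a color for $x$.

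In summary, the whole proof is a short greedy counting argument built on the criticality of $G$; the only genuine subtlety is the total-coloring endpoint $d_G(x)+d_G(y)=M+1$, which I would resolve by the standard device of also freeing the low-degree endpoint $x$. Since uncoloring $x$ works uniformly for every value of $d_G(x)+d_G(y)\leq M+1$, I expect to be able to treat the total case in one pass rather than splitting off the boundary case, with $d_G(x)\leq\lfloor\frac{M}{2}\rfloor$ appearing exactly at the final step where $x$ is recolored.
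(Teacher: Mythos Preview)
Your argument is correct and is exactly the standard delete-and-extend proof of this lemma; in particular your handling of the total case by additionally uncoloring the low-degree endpoint $x$ and using $2d_G(x)\leq M$ is the intended use of the hypothesis $d_G(x)\leq\lfloor M/2\rfloor$. The paper itself does not supply a proof: it quotes the result as \cite[Lemma~2.2]{WW08}, so there is nothing further to compare against, but your write-up matches the argument one finds in that reference.
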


\begin{lem}\cite[Lemma 2.3]{WW08}\label{ww-lem-2}
If $G$ is a critical edge $M$-choosable graph (resp.\,critical total $(M+1)$-choosable graph), then there is no $k$-alternator $F$ in $G$ for any integer $2\leq k\leq \lfloor\frac{M}{2}\rfloor$.
\end{lem}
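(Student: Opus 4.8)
The plan is to derive a contradiction from minimality/criticality: if a $k$-alternator $F$ existed, I would color everything outside $F$ using criticality and then extend across $F$ by a bipartite list edge coloring theorem, the point being that the alternator inequality makes the leftover lists on $E(F)$ exactly large enough. I will carry out the edge case in full and then adapt it to the total case. So suppose $F$ is a $k$-alternator with partite sets $X,Y$ and $2\le k\le\lfloor\frac{M}{2}\rfloor$. Since each $x\in X$ satisfies $d_F(x)=d_G(x)$, every edge at an $X$-vertex lies in $F$; and since a critical graph is connected with $\delta(G)\ge 1$, we have $E(F)\neq\emptyset$ (we may of course assume $X\neq\emptyset$). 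Hence $G^{\ast}:=G-E(F)$ is a \emph{proper} subgraph, so by criticality it is edge-$M$-choosable. Given any list assignment $L$ with $|L(e)|=M$, I first color $G^{\ast}$ from $L$, and then for each $e=xy\in E(F)$ with $x\in X$ I form the reduced list $L'(e)$ by deleting the colors of the already-colored edges meeting $e$.

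The crux is the size of $L'(e)$. The only colored edges adjacent to $e$ are the edges at $y$ lying outside $F$, of which there are $d_G(y)-d_F(y)$, so $|L'(e)|\ge M-(d_G(y)-d_F(y))$. Using $d_F(y)\ge d_G(y)+k-\Delta(G)$ and $\Delta(G)\le M$, I get on one hand $|L'(e)|\ge M-(\Delta(G)-k)\ge k\ge d_F(x)$, and on the other hand $|L'(e)|\ge M-d_G(y)+d_F(y)\ge d_F(y)$ because $M\ge\Delta(G)\ge d_G(y)$. Thus every edge of $F$ retains a list of size at least $\max\{d_F(x),d_F(y)\}$. Since $F$ is bipartite, the list edge coloring theorem for bipartite graphs (Galvin's theorem in the local form of Borodin, Kostochka and Woodall \cite{Borodin.lcc}, which needs exactly $|L'(e)|\ge\max\{d_F(x),d_F(y)\}$) produces an edge-$L'$-coloring of $F$; combined with the coloring of $G^{\ast}$ this is an edge-$L$-coloring of $G$, contradicting that $G$ is not edge-$M$-choosable. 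I expect the hard part to be precisely this list-size estimate together with the need for the \emph{max-of-endpoint-degrees} version of the bipartite theorem rather than the weaker uniform $\Delta$ version: it is the simultaneous bounds $|L'(e)|\ge d_F(x)$ and $|L'(e)|\ge d_F(y)$ that make the theorem applicable, and both are needed because an $F$-edge at a low-degree $y$ is still adjacent, through its $X$-endpoint, to $F$-edges running to high-degree vertices of $Y$.

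For the total case I reuse the same alternator $F$ but delete $E(F)$ \emph{and} the now-isolated vertices of $X$, total-coloring the proper subgraph $G^{\ast\ast}:=G-E(F)-X$ from $(M+1)$-lists by criticality. For $e=xy\in E(F)$ the only colored elements conflicting with $e$ are the vertex $y$ and the $d_G(y)-d_F(y)$ colored edges at $y$ (the endpoint $x$ is deliberately left uncolored), so $|L'(e)|\ge (M+1)-1-(d_G(y)-d_F(y))=M-(d_G(y)-d_F(y))\ge\max\{d_F(x),d_F(y)\}$ exactly as before, and the bipartite theorem again colors all of $E(F)$. Finally I color the vertices of $X$ one at a time: each $x$ is incident to $d_F(x)$ colored edges and adjacent only to the (pre-colored) vertices of $Y$, at most $d_F(x)$ of them, hence sees at most $2d_F(x)\le 2k\le M$ forbidden colors out of $M+1$, so a free color always remains. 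This yields a total-$L$-coloring of $G$, the desired contradiction. The hypothesis $k\le\lfloor\frac{M}{2}\rfloor$ is used exactly in this last greedy step (where $2k\le M$ gives room for the $X$-vertices); the slack comes from refusing to pre-color $X$, which is also what keeps the edge lists at the full value $\max\{d_F(x),d_F(y)\}$.
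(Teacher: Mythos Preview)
Your argument is correct and is precisely the standard proof: the paper itself does not reprove this lemma but cites it from \cite{WW08}, and the original argument there is exactly what you wrote---remove $E(F)$, list-color the remainder by criticality, check that each edge of $F$ retains a list of size at least $\max\{d_F(x),d_F(y)\}$, and finish with the Borodin--Kostochka--Woodall bipartite list edge theorem (with the greedy completion of the $X$-vertices in the total case). Nothing is missing.
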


Now we apply the above two lemmas along with Theorem \ref{str-1-g3} to proving the following theorem.

\begin{thm}\label{main-thm-1}
  If $G$ is a 1-planar graph with maximum degree $\Delta\geq 18$, then $\chi'(G)=\chi'_l(G)=\Delta$ and $\chi''(G)=\chi''_l(G)=\Delta+1$.
\end{thm}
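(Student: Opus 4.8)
The plan is to reduce both statements to choosability and then run a minimal-counterexample argument driven by the structural theorem. Since $\Delta\le\chi'(G)\le\chi'_l(G)$ and $\Delta+1\le\chi''(G)\le\chi''_l(G)$ hold for every graph, it suffices to prove the two upper bounds $\chi'_l(G)\le\Delta$ and $\chi''_l(G)\le\Delta+1$, i.e.\ that $G$ is edge $\Delta$-choosable and total $(\Delta+1)$-choosable. I would handle both with a single uniform argument by setting $M=\Delta$, exploiting the fact that Lemmas \ref{ww-lem-1} and \ref{ww-lem-2} are stated identically for critical edge $M$-choosable and critical total $(M+1)$-choosable graphs.

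Suppose $G$ were not edge $\Delta$-choosable (the total case is word-for-word the same). Then $G$ contains a critical edge $M$-choosable subgraph $H$ with $M=\Delta$; note $\Delta(H)\le M$ and, crucially, $H$ is again $1$-planar, since deleting vertices and edges from a $1$-plane drawing cannot create crossings. First I would check $\delta(H)\ge2$: if $x$ had degree $1$ with neighbour $y$, then $d_H(x)=1\le\lfloor M/2\rfloor$, and Lemma \ref{ww-lem-1} would force $d_H(y)\ge M+1>\Delta(H)$, which is impossible. Consequently Theorem \ref{str-1-g3} applies to $H$ and produces one of the configurations (a), (b), (c).

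The heart of the argument is to eliminate each configuration using the hypothesis $\Delta\ge18$. In configuration (a) there is an edge $xy$ with $d_H(x)\le5\le\lfloor M/2\rfloor$ and $d_H(x)+d_H(y)\le19$, whereas Lemma \ref{ww-lem-1} gives $d_H(x)+d_H(y)\ge M+2=\Delta+2\ge20$, a contradiction. In configuration (b) both endpoints have degree at least $6$ and $d_H(x)+d_H(y)\le16$; choosing the endpoint of smaller degree yields $\min\{d_H(x),d_H(y)\}\le8\le\lfloor\Delta/2\rfloor$, so Lemma \ref{ww-lem-1} again forces the sum to be at least $\Delta+2\ge20>16$, again a contradiction. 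In configuration (c) there is a $k$-alternator (resp.\ $k$-alternating subgraph) with $k\in\{2,3,4,5\}$, but $k\le5\le\lfloor\Delta/2\rfloor$, so its existence is excluded by Lemma \ref{ww-lem-2}. With all three configurations ruled out, $G$ is edge $\Delta$-choosable, and the identical deduction delivers total $(\Delta+1)$-choosability.

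As for where the difficulty lies, the genuine content is packed entirely into Theorem \ref{str-1-g3}: once that reducibility statement is in hand, the present deduction is a short, essentially arithmetic matching of its thresholds ($19$, $16$, and $k\le5$) against the critical-graph bounds $M+2$ and $\lfloor M/2\rfloor$. The only point that needs care is configuration (b), where Lemma \ref{ww-lem-1} must be invoked at the \emph{smaller}-degree endpoint, since a priori only the minimum of the two degrees is guaranteed to be at most $8\le\lfloor\Delta/2\rfloor$. Finally, the threshold $18$ is dictated precisely by configuration (a): we need $\Delta+2>19$, that is $\Delta\ge18$, so the hypothesis is used with no slack to spare.
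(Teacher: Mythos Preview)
Your proposal is correct and follows essentially the same route as the paper's proof: reduce to choosability, pass to a critical (sub)graph, check $\delta\ge 2$ via Lemma~\ref{ww-lem-1}, apply Theorem~\ref{str-1-g3}, and kill the configurations with Lemmas~\ref{ww-lem-1} and~\ref{ww-lem-2}. The only cosmetic difference is that the paper merges configurations (a) and (b) into the single observation that some endpoint has degree at most $8<\lfloor M/2\rfloor$ while the sum is at most $19\le M+1$, whereas you treat them separately; your remark about invoking Lemma~\ref{ww-lem-1} at the smaller-degree endpoint in case~(b) is exactly what makes that merge work.
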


\begin{proof}
Let $M$ be an integer such that $\Delta\leq M$ and $M\geq 18$.
It is sufficient to prove that $\chi'_l(G)\leq M$ and $\chi''_l(G)\leq M+1$.

Suppose, to the contrary, that there is a critical edge $M$-choosable graph (resp.\,critical total $(M+1)$-choosable graph) $G$. By Lemma \ref{ww-lem-1}, $\delta(G)\geq 2$.
Since $G$ is a 1-planar graph, by Theorem \ref{str-1-g3}, $G$ contains either (i) an edge $xy$ with $d_G(x)\leq 8<\lfloor\frac{M}{2}\rfloor$ and $d_G(x)+d_G(y)\leq 19\leq M+1$, or (ii) a $k$-alternator for some $k\in \{2,3,4,5\}$. However, Lemma \ref{ww-lem-1} implies that the local configuration (i) is forbidden, and  Lemma \ref{ww-lem-2} implies that the local configuration (ii) is absent. This contradiction completes the proof.
\end{proof}

%

\section{$(p,1)$-total labelling}

A \emph{critical $(p,1)$-total $k$-labelled graph} is a graph $G$ such that it admits no $(p,1)$-total $k$-labelling, and any proper subgraph of $G$ has a $(p,1)$-total $k$-labelling. Zhang, Yu and Liu \cite{ZYL11} proved the following two structural theorems for the critical $(p,1)$-total labelled graph.

\begin{lem}\cite[Lemmas 2.1 and 2.2]{ZYL11}\label{zyl-lem-1}
Let $G$ be a critical $(p,1)$-total $(M+2p-2)$-labelled graph with maximum degree at most $M$. For any edge $uv\in E(G)$, if $\min\{d_G(u),d_G(v)\}\leq \lfloor\frac{M+2p-2}{2p}\rfloor$, then $d_G(u)+d_G(v)\geq M+2$, and otherwise, $d_G(u)+d_G(v)\geq M-2p+3$.
\end{lem}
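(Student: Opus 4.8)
The plan is to argue by contradiction from criticality. Suppose some edge $uv$ violates the claimed bound, and without loss of generality assume $d_G(u)\le d_G(v)$, so that $d_G(u)=\min\{d_G(u),d_G(v)\}$. Since $G$ is critical, the proper subgraph $G-uv$ admits a $(p,1)$-total $(M+2p-2)$-labelling $f$, which labels every vertex of $G$ and every edge except $uv$. The whole argument reduces to showing that $f$ can be extended to a $(p,1)$-total $(M+2p-2)$-labelling of $G$ whenever the degree sum is too small, which contradicts the assumption that $G$ admits no such labelling. Throughout I would use that the palette $\{0,1,\dots,M+2p-2\}$ has exactly $M+2p-1$ colours, that each incidence constraint $|f(x)-f(e)|\ge p$ forbids an interval of at most $2p-1$ colours, and that each adjacency constraint forbids a single colour.

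First I would dispose of the weaker bound, which in fact holds for every edge regardless of the degree hypothesis. Keeping $f(u)$ and $f(v)$ fixed, I try to colour the single missing edge $uv$: its colour must avoid the labels of the $d_G(u)-1$ edges at $u$ and the $d_G(v)-1$ edges at $v$, and must lie at distance $\ge p$ from both $f(u)$ and $f(v)$. This forbids at most $(d_G(u)+d_G(v)-2)+2(2p-1)=d_G(u)+d_G(v)+4p-4$ colours. If $d_G(u)+d_G(v)\le M-2p+2$, this is at most $M+2p-2<M+2p-1$, so a free colour exists and $f$ extends, a contradiction. Hence every edge satisfies $d_G(u)+d_G(v)\ge M-2p+3$, the ``otherwise'' conclusion.

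For the stronger bound I would gain extra room by also uncolouring the low-degree endpoint $u$, and then extend in two steps: colour the edge $uv$ first, and recolour the vertex $u$ afterwards. In the first step $u$ carries no label, so the colour on $uv$ need only avoid the $d_G(u)-1$ edges at $u$, the $d_G(v)-1$ edges at $v$, and the interval around $f(v)$; this forbids at most $d_G(u)+d_G(v)+2p-3$ colours, leaving a free colour as soon as $d_G(u)+d_G(v)\le M+1$. In the second step, with a colour now fixed on $uv$, I recolour $u$: its label must differ from the labels of its $d_G(u)$ neighbours and lie at distance $\ge p$ from the labels of all $d_G(u)$ edges incident to it (now including $uv$). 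This forbids at most $d_G(u)+d_G(u)(2p-1)=2p\,d_G(u)$ colours, and this is at most $M+2p-2$ exactly when $d_G(u)\le\lfloor\frac{M+2p-2}{2p}\rfloor$, i.e. precisely under the hypothesis of the lemma. Thus a valid label for $u$ always exists, the extension succeeds, and criticality is again contradicted; therefore $d_G(u)+d_G(v)\ge M+2$.

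The delicate point is the interplay between the two steps: the colour chosen for $uv$ feeds into the incidence count for $u$, so one must check that the second count already budgets for all $d_G(u)$ incident edges, including the freshly coloured $uv$, and not merely the old ones. It does, which is why no backtracking over the choice on $uv$ is needed. The only genuine arithmetic to verify is that the vertex bound $2p\,d_G(u)\le M+2p-2$ lines up with the floor threshold, and that each incidence interval is correctly bounded by $2p-1$ even when it runs off the ends of the palette. I expect the main obstacle to be bookkeeping the overlaps among forbidden sets; since I only ever use upper bounds on those sets, overlaps can only help, so the counts are safe, provided the palette size $M+2p-1$ (rather than $M+2p-2$) is used consistently throughout.
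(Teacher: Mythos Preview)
Your argument is correct and is exactly the standard criticality-plus-counting proof: delete $uv$, extend the partial labelling by colouring $uv$ (and, for the sharper bound, first erasing the label on the low-degree endpoint $u$ and restoring it last), counting forbidden labels against the palette of size $M+2p-1$. The present paper does not actually prove this lemma---it quotes it from \cite{ZYL11} (their Lemmas~2.1 and~2.2)---and the proof there follows precisely the two-step extension you describe, so your proposal coincides with the original source's approach.
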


\begin{lem}\cite[Lemma 2.4]{ZYL11}\label{zyl-lem-2}
If $G$ is a critical $(p,1)$-total $(M+2p-2)$-labelled graph with maximum degree at most $M$, then there is no $k$-alternator $F$ in $G$ for any integer $2\leq k\leq \lfloor\frac{M+2p-2}{2p}\rfloor$.
\end{lem}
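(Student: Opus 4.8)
The plan is to argue by contradiction through a deletion-and-extension argument, entirely parallel to the proof of Lemma~\ref{ww-lem-2}. Suppose $G$ is a critical $(p,1)$-total $(M+2p-2)$-labelled graph with $\Delta(G)\le M$ that nevertheless contains a $k$-alternator $F$ with partite sets $X,Y$ for some $2\le k\le\lfloor\frac{M+2p-2}{2p}\rfloor$. Since $d_F(x)=d_G(x)$ for every $x\in X$, each $x$ has all of its edges inside $F$; in particular $X$ is independent in $G$ and every edge incident with $X$ joins $X$ to $Y$. As $F$ has at least one edge, $X\ne\emptyset$, so $H:=G-X$ is a proper subgraph of $G$ and, by minimality, admits a $(p,1)$-total $(M+2p-2)$-labelling $f$. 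The whole argument then consists in extending $f$ to $G$, a contradiction, and I would do this in two stages: first label the edges of $F$, and only afterwards label the vertices of $X$.

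For the first stage I would set up a list-edge-colouring problem on the bipartite graph $F$. For each edge $xy\in F$ with $x\in X$ and $y\in Y$, let $L(xy)$ be the set of labels in $\{0,1,\dots,M+2p-2\}$ that avoid the labels already used by $f$ on the edges incident with $y$ in $H$ and that differ from $f(y)$ by at least $p$. Counting the excluded labels gives $|L(xy)|\ge (M+2p-1)-(d_G(y)-d_F(y))-(2p-1)=M-d_G(y)+d_F(y)$. Here $d_G(y)\le\Delta(G)\le M$ yields $M-d_G(y)+d_F(y)\ge d_F(y)$, while the defining inequality $d_F(y)\ge d_G(y)+k-\Delta(G)$ of a $k$-alternator together with $\Delta(G)\le M$ yields $M-d_G(y)+d_F(y)\ge M-\Delta(G)+k\ge k\ge d_G(x)=d_F(x)$. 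Hence $|L(xy)|\ge\max\{d_F(x),d_F(y)\}$ for every edge of $F$, so by the degree form of Galvin's theorem (a bipartite graph is edge-$L$-colourable whenever every edge $xy$ satisfies $|L(xy)|\ge\max\{d_F(x),d_F(y)\}$) the edges of $F$ can be properly coloured from these lists. Colouring the edges before the vertices of $X$ is precisely what lets me ignore the incidences with the still-unlabelled vertices of $X$ at this stage.

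For the second stage I would label the vertices of $X$ greedily. Fix $x\in X$; its $d_G(x)\le k$ incident edges are all coloured. A label for $x$ must differ from $f(y)$ for each of its $\le k$ neighbours $y$ (each forbidding one value) and must differ by at least $p$ from the colour of each of its $\le k$ incident edges (each forbidding at most $2p-1$ values). The number of forbidden labels is therefore at most $d_G(x)\,(1+(2p-1))=2p\,d_G(x)\le 2pk\le M+2p-2$, where the last inequality uses $k\le\lfloor\frac{M+2p-2}{2p}\rfloor$. Since there are $M+2p-1$ available labels, at least one remains for $x$. Labelling all vertices of $X$ in this way extends $f$ to a $(p,1)$-total $(M+2p-2)$-labelling of $G$, contradicting the criticality of $G$ and completing the proof.

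The main obstacle, and the place where the definition of a $k$-alternator is used in an essential way, is the list-size bound in the first stage: one needs $|L(xy)|\ge\max\{d_F(x),d_F(y)\}$ to invoke Galvin's theorem, and it is precisely the alternator inequality $d_F(y)\ge d_G(y)+k-\Delta(G)$ that forces the list to be large enough (at least $k\ge d_F(x)$) even when $y$ carries many already-coloured edges. The only other delicate point is bookkeeping the order of the two stages together with the budget $2pk\le M+2p-2$ in the vertex step, which is exactly why the hypothesis caps $k$ at $\lfloor\frac{M+2p-2}{2p}\rfloor$; this is the natural $(p,1)$-analogue of the bound $\lfloor\frac{M}{2}\rfloor$ appearing in Lemma~\ref{ww-lem-2}.
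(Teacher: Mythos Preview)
Your argument is correct. The paper does not prove this lemma itself but cites it from \cite{ZYL11}; your two-stage extension (Galvin's theorem on the bipartite alternator for the edges, followed by the greedy $2pk\le M+2p-2$ count for the vertices of $X$) is exactly the standard proof used there, and is, as you note, the direct $(p,1)$-analogue of the Wu--Wang argument behind Lemma~\ref{ww-lem-2}.
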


\begin{thm}\label{main-thm-2}
  If $G$ is a 1-planar graph with $\Delta(G)\geq 8p+2$ and $p\geq 2$, then $\lambda^T_p(G)\leq\Delta(G)+2p-2$.
\end{thm}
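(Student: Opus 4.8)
The plan is to mirror the proof of Theorem~\ref{main-thm-1}, replacing the choosability lemmas of Wu and Wang by the critical-graph lemmas of Zhang, Yu and Liu (Lemmas~\ref{zyl-lem-1} and~\ref{zyl-lem-2}) and feeding them the structural dichotomy of Theorem~\ref{str-1-g3}. Set $M=\Delta(G)$, so that $M\ge 8p+2$ and, since $p\ge 2$, also $M\ge 18$. It suffices to show that $G$ admits a $(p,1)$-total $(M+2p-2)$-labelling. Suppose not; among all subgraphs of $G$ admitting no such labelling, choose one, $H$, that is minimal with respect to $|V|+|E|$. Then $H$ is a critical $(p,1)$-total $(M+2p-2)$-labelled graph with $\Delta(H)\le\Delta(G)=M$, and $H$ is $1$-planar since it inherits a $1$-plane drawing from $G$. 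The single numerical fact driving the whole argument is
\[
k_0:=\Big\lfloor\tfrac{M+2p-2}{2p}\Big\rfloor\ge\Big\lfloor\tfrac{(8p+2)+2p-2}{2p}\Big\rfloor=\Big\lfloor\tfrac{10p}{2p}\Big\rfloor=5,
\]
so that the range $2\le k\le k_0$ of Lemma~\ref{zyl-lem-2} contains all of $\{2,3,4,5\}$.

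First I would verify that $\delta(H)\ge 2$. An isolated vertex of $H$ could be deleted and the resulting labelling extended, contradicting minimality, so $\delta(H)\ge 1$; and if $H$ had a vertex $u$ of degree $1$ with neighbour $v$, then $\min\{d_H(u),d_H(v)\}=1\le k_0$ and Lemma~\ref{zyl-lem-1} would give $d_H(u)+d_H(v)\ge M+2$, whence $d_H(v)\ge M+1>\Delta(H)$, which is impossible. With $\delta(H)\ge 2$ in hand, Theorem~\ref{str-1-g3} applies to the $1$-planar graph $H$ and produces one of the configurations (a), (b), (c), all measured with respect to $\Delta(H)$.

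Then I would discard each configuration using the two regimes of Lemma~\ref{zyl-lem-1} and the alternator ban of Lemma~\ref{zyl-lem-2}. In configuration (a) there is an edge $xy$ with $d_H(x)\le 5\le k_0$ and $d_H(x)+d_H(y)\le 19$; here $\min\{d_H(x),d_H(y)\}\le k_0$, so Lemma~\ref{zyl-lem-1} yields $d_H(x)+d_H(y)\ge M+2\ge(8p+2)+2=8p+4\ge 20>19$, a contradiction. In configuration (b) there is an edge $xy$ with $d_H(x),d_H(y)\ge 6$ and $d_H(x)+d_H(y)\le 16$; whichever regime of Lemma~\ref{zyl-lem-1} applies, the weaker bound $d_H(x)+d_H(y)\ge M-2p+3\ge(8p+2)-2p+3=6p+5\ge 17>16$ already contradicts $d_H(x)+d_H(y)\le 16$. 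In configuration (c) (read in its $k$-alternator form) there is a $k$-alternator of $H$ for some $k\in\{2,3,4,5\}$, and since $k\le 5\le k_0$ this is forbidden by Lemma~\ref{zyl-lem-2}. Every configuration is contradictory, so $H$ cannot exist and $G$ admits the required labelling.

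The substantive work is already packaged in Theorem~\ref{str-1-g3}, so this step is a clean deduction rather than a fresh difficulty; the only delicate point is the arithmetic that pins the threshold at exactly $8p+2$. The bound is in fact doubly tight: the estimate $k_0\ge 5$ needed for configuration (c) fails the moment $M\le 8p+1$, while at the extremal value $p=2$ the estimate $M+2>19$ needed for configuration (a) itself forces $M\ge 18=8p+2$. I expect the main care to lie in handling the two-regime statement of Lemma~\ref{zyl-lem-1} in configuration (b), and in confirming that the $k$-alternator produced by Theorem~\ref{str-1-g3} (defined through $\Delta(H)$) is precisely the object banned by Lemma~\ref{zyl-lem-2} (whose admissible range of $k$ is governed by $M=\Delta(G)\ge\Delta(H)$), so that no mismatch in the ambient maximum degree can occur.
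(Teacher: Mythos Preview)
Your proposal is correct and follows essentially the same route as the paper's proof: reduce to a minimal counterexample, use Lemma~\ref{zyl-lem-1} to get $\delta\ge 2$, apply Theorem~\ref{str-1-g3}, and then eliminate configurations (a), (b), (c) via Lemmas~\ref{zyl-lem-1} and~\ref{zyl-lem-2}, with the same arithmetic checks $M+2\ge 20$, $M-2p+3\ge 17$, and $\lfloor(M+2p-2)/(2p)\rfloor\ge 5$. Your extra care about the two regimes in (b) and about the ambient $\Delta$ in the alternator definition is sound but not needed beyond what the paper does.
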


\begin{proof}
Let $M$ be an integer such that $\Delta(G)\leq M$ and $M\geq 8p+2\geq 18$. Now, proving $\lambda^T_p(G)\leq M+2p-2$ is sufficient.
Suppose, to the contrary, that $G$ is a critical $(p,1)$-total $(M+2p-2)$-labelled graph. By Lemma \ref{zyl-lem-1}, $\delta(G)\geq 2$.
Since $G$ is a 1-planar graph, by Theorem \ref{str-1-g3}, $G$ contains either (i) an edge $xy$ with $d_G(x)\leq 5\leq \lfloor\frac{M+2p-2}{2p}\rfloor$ and $d_G(x)+d_G(y)\leq 19\leq M+1$, or (b) an edge with $d_G(x)+d_G(y)\leq 16\leq M-2p+2$, or (c) a $k$-alternator for some $2\leq k\leq 5\leq \lfloor\frac{M+2p-2}{2p}\rfloor$. However, the configuration (i) or (ii) cannot appear in $G$ by Lemma \ref{zyl-lem-1}, and  the configuration (iii) is absent from $G$ by Lemma \ref{zyl-lem-2}.
\end{proof}

\section{Equitable edge coloring}

A \emph{critical equitable edge $M$-colorable graph} is a graph $G$ such that $G$ admits no equitable edge $M$-colorings, and
any proper subgraph $H$ of $G$ is equitable edge $M$-colorable. The following are two useful structural results for the critical equitable edge $k$-colorable graph.

\begin{lem}\cite[Lemma 6]{HWYZ17}\label{HWYZ-lem-1}
If $G$ is a critical equitable edge $M$-colorable graph, then $d_G(x)+d_G(y)\geq M+2$ for any $xy\in E(G)$.
\end{lem}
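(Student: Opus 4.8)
The plan is to argue by contradiction using the criticality of $G$. Suppose there were an edge $xy\in E(G)$ with $d_G(x)+d_G(y)\le M+1$. I would delete this edge and consider $G'=G-xy$, which is a proper subgraph of $G$ and therefore admits an equitable edge $M$-coloring $\varphi$ by the very definition of a critical equitable edge $M$-colorable graph. The goal is then to extend $\varphi$ to an equitable edge $M$-coloring of the whole of $G$ by assigning a single color to the restored edge $xy$, contradicting the assumption that $G$ has no equitable edge $M$-coloring.

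The key observation I would exploit is that an equitable edge coloring is \emph{not} required to be proper: the only constraint is the balance condition $|c_i(\varphi,v)-c_j(\varphi,v)|\le 1$ at every vertex. Hence the sole obstruction to coloring $xy$ is keeping $x$ and $y$ balanced, and the adjacency of $xy$ with already-colored edges is irrelevant. I would first record that in $G'$ both endpoints have degree strictly less than $M$, since $d_{G'}(x)=d_G(x)-1\le M-d_G(y)\le M-1$ and symmetrically for $y$. From the balance condition this forces each color to appear either $0$ or $1$ times at $x$ (and at $y$): if every color appeared at least once, the degree would be at least $M$. Consequently exactly $d_{G'}(x)$ distinct colors occur on the edges at $x$, and exactly $d_{G'}(y)$ at $y$.

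The heart of the argument is an elementary counting step. The colors I must avoid when coloring $xy$ are precisely those already present at $x$ or at $y$, and their total number is at most $d_{G'}(x)+d_{G'}(y)=d_G(x)+d_G(y)-2\le M-1<M$. Thus at least one color $c$ is absent from both endpoints; setting $\varphi(xy)=c$ raises the multiplicity of $c$ from $0$ to $1$ at each of $x$ and $y$, leaves every other multiplicity at these vertices unchanged, and affects no other vertex. Both $x$ and $y$ therefore remain balanced while all other vertices are untouched, so $\varphi$ extends to an equitable edge $M$-coloring of $G$, the desired contradiction.

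I do not anticipate a serious obstacle: once one notices that properness plays no role, the statement reduces to the fact that two vertices whose degrees sum to at most $M+1$ cannot between them forbid all $M$ colors. The only point demanding care is justifying the forced $0/1$ multiplicity pattern after deletion, which is exactly where the hypothesis $d_G(x)+d_G(y)\le M+1$ (and hence $d_{G'}(x),d_{G'}(y)<M$) is used; the remainder is routine bookkeeping.
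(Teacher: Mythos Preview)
Your proposal is correct and follows the standard deletion-and-extension argument that the paper implicitly relies on: the paper does not prove this lemma directly but cites \cite{HWYZ17} and notes in its Remark that the proof there (for $M\ge 21$) applies here ``with few changes,'' and your argument is exactly that proof in its natural generality. The only substantive step---that $d_{G'}(x),d_{G'}(y)<M$ forces every color class at $x$ and at $y$ to have size $0$ or $1$, so a common missing color exists---is handled correctly.
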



\begin{lem}\cite[Lemma 7]{HWYZ17}\label{HWYZ-lem-2}
If $G$ is a critical equitable edge $M$-colorable graph, then there is no $k$-alternating subgraph $F$ in $G$ for any integer $2\leq k\leq \lfloor\frac{M}{2}\rfloor$.
\end{lem}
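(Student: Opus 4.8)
The plan is to argue by contradiction and exhibit the $k$-alternating subgraph as a reducible configuration. Assume $G$ is critical equitable edge $M$-colorable yet contains a $k$-alternating subgraph $F$ with partite sets $X,Y$ for some $2\le k\le\lfloor M/2\rfloor$; I would derive an equitable edge $M$-coloring of $G$, contradicting criticality. First note that each $x\in X$ satisfies $d_F(x)=d_G(x)\le k$, so \emph{all} edges at $x$ lie in $F$ and $x$ is a low-degree vertex, while each $y\in Y$ carries $d_F(y)\ge k$ edges of $F$, every one of which runs to $X$ (since $F$ is bipartite and all edges at vertices of $X$ belong to $F$). As $X\ne\emptyset$, the graph $G-X$ is a \emph{proper} subgraph of $G$, hence by criticality it admits an equitable edge $M$-coloring $\phi$; here each $y\in Y$ has lost exactly its $d_F(y)$ neighbours in $X$, so $d_{G-X}(y)=d_G(y)-d_F(y)$, and the only edges of $G$ left uncolored by $\phi$ are precisely those of $F$.

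It then remains to extend $\phi$ to $G$ by coloring $E(F)$ so that balance is restored at every vertex. At a vertex $x\in X$ this is easy: $x$ has at most $k\le\lfloor M/2\rfloor<M$ incident edges, so balance at $x$ merely demands that these edges receive pairwise distinct colors. The delicate requirement is at the vertices of $Y$: in $\phi$ each color $i$ already appears $m_i(y)\in\{\lfloor(d_G(y)-d_F(y))/M\rfloor,\,\lceil(d_G(y)-d_F(y))/M\rceil\}$ times at $y$, and the $d_F(y)$ new $F$-edges at $y$ must be colored so that the final counts $m_i(y)+a_i(y)$ all coincide with $\lfloor d_G(y)/M\rfloor$ or $\lceil d_G(y)/M\rceil$.

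I would recast this as a single simultaneous (list) edge-coloring problem on the bipartite graph $F$: choose, for each color $i$, the set $F_i\subseteq E(F)$ of edges to receive color $i$, subject to (i) the $F_i$ partitioning $E(F)$, (ii) $d_{F_i}(x)\le 1$ for every $x\in X$, and (iii) $m_i(y)+d_{F_i}(y)$ being balanced over $i$ at every $y\in Y$. This is a degree-constrained bipartite edge-coloring, which I would realize by a Hall-type or network-flow argument that routes each edge to a color currently deficient at its $Y$-endpoint. The main obstacle is exactly reconciling (ii) and (iii), since the color of an edge $xy$ is shared by both endpoints; note that a black-box appeal to de Werra's equitable bipartite edge-coloring theorem does \emph{not} suffice, because the multiplicities $m_i(y)$ are already fixed by $\phi$. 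The hypothesis $k\le\lfloor M/2\rfloor$, i.e. $M\ge 2k$, is what supplies the needed slack: each $x$ meets at most $k$ of the $M$ colors, and no $Y$-vertex ever needs a color more than $\lceil d_G(y)/M\rceil$ times, so the relevant Hall/flow feasibility condition can be checked to hold. A valid choice of $\{F_i\}$ then yields an equitable edge $M$-coloring of $G$, the desired contradiction, and hence no $k$-alternating subgraph can exist.
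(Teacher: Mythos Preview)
Your overall scheme---assume a $k$-alternating subgraph $F$ exists, color $G-X$ by criticality, then extend across $E(F)$---is exactly the reduction the paper relies on; note, however, that the paper itself gives no self-contained argument here but defers entirely to \cite{HWYZ17}, merely remarking that the proof there (written for $2\le k\le 5$) goes through verbatim for $2\le k\le\lfloor M/2\rfloor$.

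The place where your outline is genuinely incomplete is the extension step, and you already sense this. You formulate the right degree-constrained coloring problem (i)--(iii) on $F$ and assert that ``the relevant Hall/flow feasibility condition can be checked to hold,'' but you neither write the condition down nor verify it; that verification \emph{is} the lemma. Two specific points deserve attention. First, your slack heuristic invokes only $M\ge 2k$ to handle the $X$-side, but the hypothesis $d_F(y)\ge k$ on the $Y$-side is not decorative: it is what guarantees that, at every stage and at every $y$, enough colors remain ``deficient'' (below the target ceiling $\lceil d_G(y)/M\rceil$) that each edge $xy$ sees a list of size at least $k\ge d_F(x)$, which is what makes a Hall-type argument on $F$ go through. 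Second, you correctly note that de~Werra's theorem does not apply as a black box because the $m_i(y)$ are prescribed; but you then propose nothing concrete in its place. The argument in \cite{HWYZ17} does carry this out, and it is not a one-line flow computation---so you should expect the feasibility check to require real work rather than treating it as routine bookkeeping.
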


\noindent \textbf{Remark:}  the original statements of Lemmas 6 and 7 in \cite{HWYZ17} are not as the same as the above two ones. Actually, Lemma 6 of the paper \cite{HWYZ17} states that if $G$ is a critical equitable edge $M$-colorable graph with $M\geq 21$, then $d_G(x)+d_G(y)\geq 23$ for any $xy\in E(G)$. Indeed, the proof there is still applicable for proving Lemma \ref{HWYZ-lem-1} here, only with few changes. On the other hand, from the fourth paragraph to the end of the proof of Lemma 7 in \cite{HWYZ17}, the authors claim that any critical equitable edge $M$-colorable graph does not contains a
bipartite subgraph $H'$ with partite sets $X'', Y'$ such that $d_{H'}(x)=d_G(x)\leq k$ for each $x\in X''$, and $d_{H'}(y)\geq k$ for each $y\in Y'$, where $2\leq k\leq 5$. One can easily check that their proof can be directly extended to the case when $2\leq k\leq \lfloor\frac{M}{2}\rfloor$, without changing any word. Therefore, there is no $k$-alternating subgraph in a critical equitable edge $M$-colorable graph $G$ for any integer $2\leq k\leq \lfloor\frac{M}{2}\rfloor$.

\begin{thm}\label{main-thm-2}
  If $G$ is a 1-planar graph, then $\chi'_{\equiv}(G)\leq 18$.
\end{thm}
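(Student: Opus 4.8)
The plan is to convert the threshold bound into a single colorability statement and then run exactly the contradiction scheme already used for Theorem \ref{main-thm-1} and for the $(p,1)$-labelling result. By the definition of the equitable edge chromatic threshold, it suffices to show that for every fixed integer $M\geq 18$ the $1$-planar graph $G$ admits an equitable edge $M$-coloring. So I fix such an $M$ and argue by contradiction. Suppose $G$ has no equitable edge $M$-coloring, and among all subgraphs of $G$ sharing this defect choose one, say $H$, minimizing $|V(H)|+|E(H)|$. Then $H$ is a \emph{critical equitable edge $M$-colorable graph}, and, being a subgraph of a $1$-planar graph, $H$ is itself $1$-planar.

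Before invoking the structure theorem I would establish $\delta(H)\geq 2$, since Theorem \ref{str-1-g3} requires minimum degree at least $2$. This is the one place where I cannot simply read the bound off Lemma \ref{HWYZ-lem-1}: a critical equitable edge colorable graph carries no a priori upper bound on its maximum degree, so a leaf would only force its neighbor to have large degree, not a contradiction. Instead I use a direct extension argument exploiting the fact that equitable colorings need not be proper. If $H$ had a vertex $v$ with $d_H(v)\leq 1$, then $H-v$ is a proper subgraph and hence admits an equitable edge $M$-coloring $\varphi$; coloring the (at most one) edge incident with $v$ by a color occurring least often at its other endpoint raises one smallest color count there by $1$ and keeps $v$ trivially balanced, so $\varphi$ extends to an equitable edge $M$-coloring of $H$, contradicting the choice of $H$. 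Hence $\delta(H)\geq 2$.

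Finally I would apply Theorem \ref{str-1-g3} to $H$ and rule out each outcome, using the $k$-alternating-subgraph form of configuration (c). Because $M\geq 18$ yields $M+2\geq 20$ and $\lfloor M/2\rfloor\geq 9$, an edge as in (a) would satisfy $d_H(x)+d_H(y)\leq 19<M+2$ and an edge as in (b) would satisfy $d_H(x)+d_H(y)\leq 16<M+2$, both contradicting Lemma \ref{HWYZ-lem-1}; meanwhile a $k$-alternating subgraph with $k\in\{2,3,4,5\}\subseteq\{2,\dots,\lfloor M/2\rfloor\}$ is excluded by Lemma \ref{HWYZ-lem-2}. Every case is impossible, so no obstruction $H$ exists, $G$ is equitably edge $M$-colorable for each $M\geq 18$, and therefore $\chi'_{\equiv}(G)\leq 18$.

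The only genuinely non-routine step is the minimum-degree reduction, where the permissive (non-proper) nature of equitable colorings must be used to extend a coloring past degree-$\leq 1$ vertices; everything else is a faithful translation of the structure theorem through the two criticality lemmas, entirely parallel to the earlier proofs.
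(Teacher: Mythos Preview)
Your proof is correct and follows the same scheme as the paper's: reduce to a critical graph, verify $\delta\geq 2$, apply Theorem \ref{str-1-g3}, and eliminate each configuration via Lemmas \ref{HWYZ-lem-1} and \ref{HWYZ-lem-2}. You are in fact slightly more careful than the paper at the minimum-degree step: the paper simply cites Lemma \ref{HWYZ-lem-1} for $\delta(G)\geq 2$, but as you correctly observe this does not follow without a bound $\Delta\leq M$, and your direct extension argument (choosing a least-used color at the neighbor of a leaf) is the proper way to dispose of degree-$\leq 1$ vertices.
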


\begin{proof}
Let $M$ be an integer such that $M\geq 18$. We just need to prove that $G$ has an equitable edge $M$-coloring. Suppose, to the contrary, that $G$ is a critical equitable edge $M$-colorable graph. By Lemma \ref{HWYZ-lem-1}, $\delta(G)\geq 2$. Since $G$ is a 1-planar graph, by Theorem \ref{str-1-g3}, $G$ contains either (i) an edge $xy$ with $d_G(x)+d_G(y)\leq 19$, or (ii) a $k$-alternating subgraph for some $k\in \{2,3,4,5\}$. However, $d_G(x)+d_G(y)\geq M+2\geq 20$ for any $xy\in E(G)$ by Lemma \ref{HWYZ-lem-1}, which makes the configuration (i) absent, and Lemma \ref{HWYZ-lem-2} do not support the appearance of the configuration (ii).
\end{proof}

\bibliographystyle{srtnumbered}
\bibliography{mybib}

\end{document}